\title{Consistency of invariance-based randomization tests}
\date{\today}
\author{Edgar Dobriban\footnote{Department of Statistics and Data Science, The Wharton School, University of Pennsylvania. \texttt{dobriban@wharton.upenn.edu}.}}
\begin{document}

\maketitle

\abstract{Invariance-based randomization tests---such as permutation tests, rotation tests, or sign changes---are an important and widely used class of statistical methods. They allow drawing inferences under weak assumptions on the data distribution. Most work focuses on their type I error control properties, while their consistency properties are much less understood.

We develop a general framework and a set of results on the consistency of invariance-based randomization tests in signal-plus-noise models. Our framework is grounded in the deep mathematical area of representation theory. We allow the transforms to be general compact topological groups, such as rotation groups, acting by general linear group representations. We study test statistics with a generalized sub-additivity property.
We apply our framework to a number of fundamental and highly important problems in statistics, including sparse vector detection,  testing for low-rank matrices in noise, sparse detection in linear regression, and two-sample testing. Comparing with minimax lower bounds, we find perhaps surprisingly that in some cases, randomization tests detect signals at the minimax optimal rate.
}

\tableofcontents
\medskip

\section{Introduction}
Invariance-based  randomization tests---such as permutation tests---are an important, fundamental, and widely used class of statistical methods. They allow making inferences in general settings, with few assumptions on the data distribution. Most methodological and theoretical work focuses on their validity, studying their  type I error (false positive rate) control. There is also work on their robustness properties, but less is known about their power and consistency properties. 

Our work develops a general theoretical framework to understand the consistency properties of invariance-based randomization tests in signal-plus-noise models. In particular, we allow the randomization distributions to be Haar measures over general compact topological groups, such as rotation groups. We go beyond most prior work, which focuses on discrete groups (mainly permutation groups), and does not fully develop the technically challenging case of compact groups. Moreover, we allow the action of these groups on the data to be via arbitrary compact linear group representations.

We apply our theoretical framework to a number of fundamental and highly important problems in statistics, including sparse vector detection,  low-rank matrix detection, sparse detection in linear regression,  and two-sample testing. 
Perhaps surprisingly, we find that invariance-based randomization tests are minimax rate optimal in a number of cases. 
The reason why we consider this to be surprising is that the randomization tests are constructed using the same universal principle. They have only minimal information about the problem, namely a set of symmetries of the noise, and a test statistic that is expected to be ``large" under the alternative. 

In more detail, our contributions are as follows:
\benum
\item {\bf Representation-theoretic framework.} We develop a framework for consistency of invariance-based randomization tests based on group representation theory. In our framework, we have a compact topological group that acts linearly on the data space. We assume that under the null hypothesis,  distribution of the data is invariant under the action of the group. We sample several group elements chosen at random from the Haar measure on the group, and apply them to the data. We consider the standard invariance-based randomization test which rejects the null hypothesis when a chosen test statistic is larger than an appropriate quantile of the values of the test statistic applied to the randomly transformed data.

\item {\bf Consistency results.}  We develop consistency results for the invariance-based randomization test in signal-plus-noise models. We consider sequences of signal-plus-noise models where the signal equals zero under the null hypothesis. We study broad classes of test statistics {satisfying the weak requirement of so-called $\psi$-subadditivity. This includes, 
for instance, suprema of linear functionals of the data, 
norms and semi-norms, concave non-decreasing functions in one dimension, convex functions of bounded growth.
Further, this class is closed under conic combinations, taking maxima, and compositions with one-dimensional nondecreasing sub-additive functions.}

We develop a general consistency result, showing that if the sequence of alternatives is such that the value of the test statistic is large enough, then the test rejects with probability tending to unity.
{We compare this to the corresponding result for the deterministic test based on the same the statistic.}
The consistency threshold is inflated slighly by a signal-noise interference effect. By randomly transforming the signal, we create additional noise, inflating the effective noise level in the randomized statistic compared to its distribution under the null. However, we later show that in many examples this inflated noise level can be controlled.
 As part of our consistency theory,  we extend to the setting with nuisance parameters, which allows us to handle problems such as two-sample testing. 

\item {\bf New proof techniques.}  Our proofs are based on novel approaches. For the proofs of the general consistency result, we proceed by a series of reductions, first reducing from the quantile of the randomization distribution to its maximum, then from considering several random transformations to only one transform, and then reducing from a dependent transformed signal and noise to independent ones. 

\item {\bf Examples.} We illustrate our results in several important examples. We show that our results provide consistency conditions for invariance-based  randomization tests in a number of problems, including sparse vector detection,  low-rank matrix detection, sparse detection in linear regression, and two-sample testing.

{For sparse vector detection we consider two settings: where the noise vectors for the different observations are independent and sign symmetric (but not necessarily identically distributed), and where they are independent and rotationally symmetric (spherical). For both cases we obtain general consistency results, and some matching lower bounds. Specifically for the sign symmetric case where the entries of the noise are independent and identically distributed according to a sub-exponential distribution, our upper bound for the signflip randomization test matches a lower bound that we obtain. For spherical noise we obtain general upper bounds as well as specific examples for multivariate $t$ distributions. We also provide similar results for two sample testing.}

{For low rank matrix detection, we consider the case where each of the columns of the noise matrix as an independent spherical distribution. We obtain a general upper bound for the operator norm test statistic, using the associated rotation test. We show that this result is rate optimal for the special case of normal noise.}

{For sparse vector detection in linear regression, we study detection based on the $\ell_\infty$ norm of the least-squares estimator. We assume that the noise entries of each observation are independent and sign-symmetric. We provide a consistency result for the associated signflip based randomization test, in terms of geometric quantities determined by the feature matrix; namely the suprema of two associated Bernoulli processes.} 
\eenum
As a general conclusion, we think it is perhaps surprising that invariance-based randomization tests can sometimes adaptively detect signals at the same rate as the optimal tests that assume knowledge about the exact noise distribution. 
We support our claims with numerical experiments.

{\bf Note on terminology.} We follow the terminology of ``randomization tests" from Ch. 15.2 of the standard textbook by \cite{lehmann2005testing}: ``the term randomization test will refer to tests obtained by recomputing a test statistic over transformations (not necessarily permutations) of
the data." This does not consider tests based on randomization of treatments; see e.g., \cite{onghena2018randomization,hemerik2020another} for discussion. In particular, \cite{hemerik2020another} suggest using ``randomization tests" only when the treatments are randomized, and suggest using ``group invariance tests" for the type of tests we consider. 
For consistency with the standard textbook by \cite{lehmann2005testing}, we will simply use the terminology ``invariance-based randomization tests" or ``randomization tests". {Another well-known example of randomization occurs with discretely distributed tests, to ensure exact type I error control; our work is unrelated to this issue.}

{{\bf Some notations.} For a positive integer $m\ge 1$, the $m$-dimensional all-ones vector is denoted as $1_m = (1,1,\ldots,1)^\top$. We denote $[m]:=\{1,2,\ldots,m\}$, and for $j\in [m]$, the $j$-th standard basis vector by $e_j = (0,\ldots, 1, \ldots, 0)$, where only the $j$-th entry equals unity, and all other entries equal zero. The variance of a random variable $X$ is denoted as $\V{X}$ or $\Var{X}$. For two random vectors $X,Y$, we denote by $X=_dY$ that they have the same distribution.
For an index $m=1,2,\ldots$, and two sequences $(a_m)_{m\ge 1}, (b_m)_{m\ge 1}$, $a_m \lesssim b_m$ (and $a_m = O(b_m)$) means that $a_m \le  Cb_m$ for some $C \ge 1$ independent of $m$, but possibly dependent on other problem parameters as specified case by case. 
We write $a_m \gtrsim b_m$ (or $a_m = \Omega(b_m)$) when  $b_m \lesssim a_m$, and $a_m \sim b_m$ (or $a_m = \Theta(b_m)$) when $a_m \lesssim b_m \lesssim a_m$.
For a vector $v \in \R^m$, and $p\in(0,\infty)$, $\|v\|_p$ denotes the $\ell_p$ norm. Unless otherwise specified, $\|v\|$ denotes the Euclidean or $\ell_2$ norm, $\|v\|=\|v\|_2$. 
For a matrix $s$, the norm $\|s\|_{2,\infty}$ is the maximum of the column $\ell_2$ norms of $s$.
For two subsets $A,B$ of a vector space, $A+B = \{a+b:a\in A,b\in B\}$ denotes the Minkowski sum. 
For a $p\times 1$ vector $v$, let $M=\diag(v)$ be the $p\times p$ diagonal matrix with the entries $M_{ii}=v_i$.
A function $f:V\mapsto V'$, where $V,V'$ are two vector spaces, is an odd function if $f(-v) = - f(v)$ for all $v\in V$.
A Rademacher random variable is uniform over the set $\{\pm 1\}$. 
For a probability distribution $Q$ and a random variable $X\sim Q$, we may write probability statements involving $X$ in several equivalent ways, for instance for the probability that $X$ belongs to a measurable set $A$, we may write: $P(X\in A)$, $P_X(A)$, $P_{Q}(X\in A)$, $P_{X\sim Q}(X\in A)$, $Q(X\in A)$, or $Q(A)$. 
Further, if $Q$ belongs to a collection of probability measures $H$ (e.g., a null or an alternative  hypothesis), then we may also write $P_{H}(A)$ to denote $Q(A)$ for an arbitrary $Q\in H$.
}

\subsection{Related works}
There is a large body of important related work. Here we can only review the most closely related ones due to space limitations. The idea of constructing a statistical test based on randomly chosen permutations of iid samples in a dataset dates back at least to \cite{eden1933validity,fisher1935design}, see \cite{david2008beginnings,berry2014chronicle} for historical details. General references on permutation tests include \cite{pesarin2001multivariate,ernst2004permutation,pesarin2010permutation,pesarin2012review,good2006permutation,anderson2001permutation,kennedy1995randomization,hemerik2018exact}.
These tests have many applications, for instance in genomics \citep{tusher2001significance} and neuroscience \citep{winkler2014permutation}.  For more general discussions of invariance in statistics see \cite{eaton1989group,wijsman1990invariant,giri1996group}; for a general probabilistic reference see also \cite{kallenberg2006probabilistic}.

Two-sample permutation tests date back at least to \cite{pitman1937significance}, and have recently been studied in more general multivariate contexts \citep{kim2020robust}. This problem brings special considerations such as issues with using balanced permutations \citep{southworth2009properties}.

A number of invariance-based randomization based tests have been developed for linear and generalized linear models \citep{freedman1983nonstochastic,perry2010rotation,winkler2014permutation,hemerik2020robust}. The works by \cite{anderson1999empirical,winkler2014permutation} review and compare a number of previously proposed permutation methods for inference in linear models with nuisance parameters.  \cite{hemerik2020permutation} show empirically that permutation tests can control type I error even in certain high dimensional linear models.  \cite{hemerik2020robust} develop tests for potentially mis-specified generalized linear models by randomly  flipping signs of score contributions. 

Other specific problems where invariance-based randomization tests have been developed include independence tests \citep{pitman1937significance2}, location and scale problems \citep{pitman1939tests}, parallel analysis type methods for PCA and factor analysis \citep{horn1965ara,buja1992,dobriban2017permutation,dobriban2019deterministic}, and time series data, where \cite{jentsch2015testing} randomly permute entries between periodograms to test for equality of spectral densities. In addition, randomization based inference has been useful to study factorial designs \citep{pauly2015asymptotic}, regression kink designs \citep{ganong2018permutation}, and linear mixed-effects models \citep{rao2019permutation}.

For the theoretical aspects of invariance-based randomization tests, \cite{lehmann1949theory} develop results for testing a null of equality in distribution $H_{m0}:X_m =_d g_mX_m$ where a transform $g_m\in \mathcal{G}_m$ is chosen from a group $\mathcal{G}_m$ acting on the data. 
They show that all admissible tests have constant rejection probability equal to the level over each orbit, i.e., are similar tests. 
They use this to show that the most powerful tests against simple alternatives with density $f_m$ reject when $f_m(X_m)$ is greater than the appropriate quantile of $\{f_m(g_mX_m),g_m\in \mathcal{G}_m\}$. They use the Hunt-Stein theorem to derive uniformly most powerful (or most stringent) invariant tests from maximin tests for testing against certain composite alternatives. These are related to our results, but we focus on consistency against special structured signal-plus-noise alternatives instead of maximizing power in finite samples.  

The seminal work by \cite{hoeffding1952large} considers general group transforms, including signflips, for testing symmetry of distributions, but focuses on permutation groups for most part. The main results center on power and consistency of tests. For consistency, the main result (Theorem 2.1 in \cite{hoeffding1952large}) states that for a test statistic $f_m$ such that $f_m(x)\ge 0$ and $\E_{G_m\sim   Q_m} f_m(G_mx) \le c$ where $G_m\sim Q_m$ denotes that the group element is distributed according to the probability measure $Q_m$ on $\mathcal{G}_m$, we have $q_{1-\alpha,m}(x) \le c/\alpha$, $\alpha\in (0,1)$, where $q_{1-\alpha,m}$ is the $1-\alpha$-th quantile of the distribution of $f_{m}(G_mx)$ when $G_m\sim   Q_m$. Then, if $f_m\to\infty$ under a sequence of alternatives, the test that rejects when $f_m>q_{1-\alpha,m}(x)$ is consistent, i.e. has power tending to unity. 
{These conditions are distinct from ours. Specifically, his conditions require the test statistic to be pointwise bounded (for each datapoint $x$, they require that $\E_{G_m\sim   Q_m} f_m(G_mx) \le c$), whereas our conditions are high probability bounds in terms of the randomness over both the data and the random transform. 
Thus, the two types of conditions are different. Our condition does not even require that the expectation $\E_{G_m\sim   Q_m} f_m(G_mx)$ is finite, and thus are applicable to heavy-tailed distributions.}

For asymptotic power of certain special invariance-based randomization tests, one can obtain results based on contiguity, see e.g., Example 15.2.4 in \cite{lehmann2005testing}. However, we are interested in problems where the contiguity of the alternatives  may be unknown, or hard to establish.

For permutation tests, \cite{dwass1957modified} shows that it is valid to randomly sample permutations---as opposed to using all permutations---to construct the randomization test. \cite{hemerik2018false} provide a general type I error control result for random group transformations under exact invariance, and apply it to false discovery proportion control.  See also \cite{hemerik2019permutation}.  \cite{hemerik2018exact} extend this in various forms, including to sampling transforms without replacement, and giving rigorously justified formulas for $p$-values.

Most works assume exact invariance of the distribution. \cite{romano1990behavior} studies the behavior of invariance-based randomization tests beyond the exact group invariance framework. This work shows that asymptotic validity holds in certain cases, and fails in others. \cite{canay2017randomization} relax assumptions to only require invariance in distribution of the limiting distribution of the test statistic.
They show that the group randomization test has an asymptotically correct level.

\cite{chung2013exact} develop general permutation tests with finite-sample error control based on studentization. Further studies include discussions of 
conditioning on sufficient statistics \citep{welch1990construction}, combination methods \citep{pesarin1990nonparametric}, and others \citep{janssen2003bootstrap,kim2020minimax}.

Beyond permutation tests, 
flipping signs is considered in many works, see e.g., \cite{pesarin2010permutation}.
Following \cite{wedderburn},  \cite{langsrud2005rotation} discusses rotation tests in Gaussian linear regression.  This approach assumes data $X_m\sim \N(0,I_{p_m} \otimes \Sigma_m)$, and computes the values of test statistics on $X_R = R_mX_m$, where $R_m$ are uniformly distributed orthogonal matrices over the symmetric group $O(p_m)$.  This is applied to testing independence of two random vectors, as well as to more general tests in multivariate linear regression. \cite{perry2010rotation} extends the method to verify latent structure. \cite{solari2014rotation} argues for the importance of this method in  multiple testing adjusting for confounding. The theoretical aspects of rotation tests for sphericity testing of densities are discussed briefly by \cite{romano1989bootstrap}, Proposition 3.2.

\cite{toulis2019life} develops residual invariance-based randomization methods for inference in regression. 
This work considers a general invariance assumption $\ep_m=_d g_m\ep_m$ for the noise $\ep_m$, for all group elements $g_m\in \mathcal{G}_m$. 
For ordinary least squares (OLS), it considers the test statistic $t(\hat\ep_m)=a^\top (X_m^\top X_m)^{-1} X_m^\top g_m\hat\ep_m$, where $\hat\ep_m$ are the OLS residuals, and $a$ is a vector. This work discusses many examples, including clustered observations such that the noise is correlated within clusters, proposing to flip the signs of the
cluster residuals. 

There are a number of works studying the power properties of invariance-based randomization tests. We have already discussed the fundamental work by \cite{hoeffding1952large}. \cite{pesarin2010finite} develop finite-sample consistency results for certain  combination-based permutation tests for multivariate data, when the sample size is fixed and the dimension tends to infinity. They focus on one-sided two-sample tests, and discuss Hotelling's $T$-test as an example.  \cite{pesarin2013weak} characterize weak consistency of permutation tests for one-dimensional two-sample problems. They study stochastic dominance alternatives assuming the population mean is finite and without assuming existence of population variance.

 \cite{pesarin2015some} develops some further theoretical aspects of permutation tests. This includes consistency properties (Property 9), for two-sample tests under some non-parametric assumptions, and alternatives specified by an increased mean of the test statistic. These have different assumptions than the results in our paper, focusing on two-sample problems (while we have general invariance), and non-parametric models (while we focus on parametric ones). 

{One one of the most closely related papers is that of \cite{kim2020minimax}. We discuss the similarities and differences. The methodology of permutation tests is a special case of general group invariance tests; however the examples in our work mostly concern sighflip-based and rotation tests. The only overlap in the specific problems studied is for two-sample testing, but under different assumptions (we study testing the equality of means in a location model, whereas they study testing the equality of two distributions such as multinomials and distributions with Holder densities). Thus our results are not directly comparable. For instance our minimax optimality for two-sample testing involves location families with IID sub-exponential noise, whereas their examples are multinomial distributions and Holder densities.}


{
{\bf In context.}  To put our work in context, we can make the following comparisons:
\bitem
\item The vast majority of works only provide theoretical results for the behavior of randomization tests under the null hypothesis \citep{eden1933validity,fisher1935design,pitman1939tests,hemerik2018exact}.
\item The seminal work of \cite{hoeffding1952large}, already discussed above, provides consistency conditions that are distinct from ours; and does not discuss any of the specific problems that we study. The power analysis based on contiguity \citep{lehmann2005testing} does not apply to many of the problems we study. As detailed above, the consistency and minimax optimality analyses from \cite{pesarin2010finite,pesarin2013weak,pesarin2015some,kim2020minimax} all concern different setups from and/or special cases of our results.
\eitem
}

{
{\bf Scientific context.}  For an even broader scientific context, we emphasize that randomization tests are ubiquitous in modern science. 	Their proper use is crucial for reproducible results; and failure to use them correctly can result in irreproducible results, false scientific discoveries, and ultimately a waste of resources. Here are some examples:
\bitem
\item  In neuroscience, the analysis of fMRI data requires testing hypotheses about the activation of regions in the brain. It has been observed that inferences based on models such as Gaussian fields with parametric covariance functions can have massively inflated false-positive rates \citep{eklund2016cluster}. To mitigate this problem, it has been proposed to use randomization methods such as permutation methods (for two-sample problems) or random sign flips (for one sample problems) to set critical values. Further randomization methods  have been proposed for other problems such as general linear models \citep{winkler2014permutation}, or brain network comparison \citep{simpson2013permutation}. \\
The ultimate goal is to report reliable discoveries, which involves analyzing data not from the null distribution, but rather from an alternative distribution that contains signals. Our work can shed light on when using randomization tests in such an analysis from data containing signals can succeed.
\item In genetics and genomics, hypothesis testing is routinely performed to identify associations between observed phenotypes and genotypes, or between genotypes, etc. Randomization tests, and in particular permutation tests, are widely used to set critical values, in methods such as transmission disequilibrium tests, 
 etc, and are broadly available in popular software such as PLINK, see for instance \cite{churchill1994empirical,purcell2007plink,epstein2012permutation}. Randomization tests are also used for more sophisticated tasks such as gene set enrichment analysis \citep{subramanian2005gene,barry2005significance,efron2007testing}.
\eitem
}

\section{General framework}
\label{nuis}

\subsection{Setup}

We consider a sequence of statistical models, indexed by an index parameter $m \to\infty$. 
We observe data $X_m$ from a real vector space $V_m$, for instance a vector or a matrix belonging to Euclidean space $\R^{p_m}$. We assume that we know a group $\mathcal{G}_m$ of the symmetries of the distribution of the data. 
{See Section \ref{prac} for a discussion of how is can arise in practice.}
A group $\mathcal{G}_m$ has a multiplication operation $"\cdot"$ that satisfies the axioms of associativity, identity, and invertibility. For instance, we could have that the entries of $X_m$ are exchangeable (corresponding to the permutation group), symmetric about zero (corresponding to the group of addition modulo two) or that the density of $X_m$ is spherical (corresponding to the rotation group). 

In addition, to transform the data, we have a group representation $\rho_m: \mathcal{G}_m \to GL_m(V_m)$, acting linearly on $X_m\in V_m$ via $g_mX_m:=\rho_m(g_m)\cdot X_m$. The group representation ``represents" the elements of the group $\mathcal{G}_m$ as invertible linear operators $V_m\mapsto V_m$ belonging to the general linear group $GL_m(V_m)$ of such operators. 
The group representation $\rho_m$ preserves the group multiplication operation, i.e., $\rho_m(g_mg_m') = \rho_m(g_m)\rho_m(g_m')$ for all $g_m,g_m'\in \mathcal{G}_m$, and $\rho_m(e_{\mathcal{G}_m})=I_{V_m}$, where $e_{\mathcal{G}_m}$ is the identity element of the group, and $I_{V_m}$ is the identity operator on $V_m$. 
For general references on representation theory, see \cite{serre1977linear,james2001representations,fulton2013representation,hall2015lie,knapp2013lie,eaton1989group}, etc. For group representations in statistics, see \cite{diaconis1988group}. We will use basic concepts from this area throughout the paper.

{\bf Null hypothesis of invariance, and randomization test.} We want to use the symmetries of the noise distribution to detect the presence of non-symmetric signals. Under the null hypothesis, we assume that the distribution of the data is invariant under the action of each group element $g_m\in \mathcal{G}_m$: $X_m=_d g_mX_m$.
\footnote{This is called the ``Randomization hypothesis", Definition 15.2.1 in \cite{lehmann2005testing}.} 
We study the following invariance-based randomization test (sometimes also called a group invariance test), which at various levels of generality has been considered dating back to \cite{eden1933validity,fisher1935design,pitman1937significance,lehmann1949theory,hoeffding1952large}. 
We sample $G_{m1},\ldots,G_{mK}$ iid from $\mathcal{G}_m$ (in a way specified below), and reject the null if for a fixed test statistic $f_m:V_m\mapsto \R_m$, the following event holds
\beq\label{t0}
\mathcal{E}_m=\{f_m(X_m)>q_{1-\alpha}\left(f_m(X_m),f_m(G_{m1}X_m),\ldots, f_m(G_{mK}X_m)\right)\},
\eeq
for the $1-\alpha$-th quantile $q_{1-\alpha}$ of the numbers $\{f_m(X_m),f_m(G_{m1}X_m),\ldots, f_m(G_{mK}X_m)\}$ and some $\alpha\in(0,1]$. 
Specifically, let $G_{m0}=I_{V_m}$ be the identity operator on $V_m$, and $f_{(1)} \le f_{(2)} \le \ldots \le  f_{(K+1)}$ be the ordered test statistics of the set $\{f_m(G_{mi}X_m), i\in \{0,1,\ldots,K\}\}$. Let $k = \lceil (1-\alpha) (K+1)\rceil$. 
Rejecting the null if $f_m(X_m)> f_{(k)}$ is guaranteed to have level at most $\alpha$, see e.g., theorem 2 in \cite{hemerik2018exact} for an especially clear and rigorous statement. 
In some cases, one can relax this to assume only $f_m(X_m)=_d f_m(g_mX_m)$ under the null, see e.g., \cite{canay2017randomization,hemerik2018exact}, but we will not pursue this.

{\bf Noise invariance and robustness.} 
The advantage of randomization tests compared to a rejection region of the form $f_m(X_m)>\tilde c_m$ for a fixed $\tilde c_m$ is that it does not require the manual specification of the critical value $\tilde c_m$.
The critical value needs to account for the set of distributions included the null hypothesis, which may be a very large nonparametric family. In this case, it might be challenging to set the critical value to ensure type I error control. Randomization tests avoid this problem by relying on the symmetries of the noise distributions.
To wit, randomization tests are valid under \emph{any} null hypothesis for which the distribution of the noise is invariant under the group. This effectively amounts to that only depend on the collection of \emph{orbits}, which form a maximal invariant of the group, see Sections 3 and 4 in \cite{eaton1989group} for examples.

For instance, for the rotation group $O(p_m)$, we get \emph{spherical} distributions, which have a density $p_m(X_m)=\pi_m(\|X_m\|_2)$ with respect to a $\sigma$-finite dominating measure on $\R^{p_m}$ only depending on the Euclidean norm of the data $X_m$ \citep{kai1990generalized,gupta2012elliptically,fang2018symmetric}. This is a non-parametric class that includes in particular distributions such as the multivariate $t$, multivariate Cauchy, scale mixtures of spherical normals etc. In particular, it includes heavy tailed distributions, for which tests based on the normal assumption can have  inflated type I error. 
As another example, consider a stationary field $X_{m,J} = (X_i)_{i\in J}$, for some index set $J$. Suppose $\mathcal{G}_m$ acts on $J$, and induces an action on $X_{m,J}$ via its regular representation, i.e., $(g_mX_{m,J})_i = X_{g_m^{-1}i}$. For instance, we can have a discrete-time stationary time series where $J = \mathbb{Z}$, and $\mathcal{G}_m = (\mathbb{Z},+)$. In this example, any translation of the time series keeps the distribution invariant; but this allows a wide range of noise distributions. 

While sometimes it is possible to construct test statistics whose distribution does not depend on a broad set of null hypotheses (see e.g., Section 4.3 ``Null robustness" in \cite{eaton1989group}), this may not be possible when the null hypothesis has a great number of nuisance parameters. For example, this holds for null hypotheses where each noise entry is independent with a probability density only assumed to be symmetric around zero, in which case sign-flip based methods are applicable, see e.g., Example 15.2.1 of \cite{lehmann2005testing}, and also \cite{hemerik2020robust,hong2020selecting}.



{\bf Haar measure.}  In the definition of the randomization test, $G_{m1},\ldots,G_{mK}$ are chosen iid from the uniform (Haar) measure on $\mathcal{G}_m$, which is assumed to exist. We refer to Section 2 in \cite{folland2016course} for details, see also \cite{fulton2013representation,eaton1989group,wijsman1990invariant}. Thus, $\mathcal{G}_m$ is assumed to be a compact Hausdorff topological group  with the Borel sigma-algebra generated by the open sets. For brevity, we will sometimes refer to such groups as \emph{compact groups}. 
The Haar probability measure $Q_m$ on $\mathcal{G}_m$ is the unique probability measure such that $Q_m(G_m\in A) = Q_m(G_m\in g_m'A)$ for all $g_m'\in \mathcal{G}_m$ and for all Borel sets $A_m$.  See e.g., Theorems 2.10 \& 2.20 in \cite{folland2016course}. Thus, in particular, we have the equality in distribution $G_m=_dG_mg_m'$ for $G_m\sim Q_m$, and any fixed $g_m'\in \mathcal{G}_m$. 

{{\bf Choice of $K$.} We remark that, as is well known, choosing $K$ larger, and $k$ as above, can generally lead to a more precise control of the type I error. Indeed, for a given $K$, the smallest type I error control guaranteed by the randomization test is $1/(K+1)$, and there are only $K$ possible values of $k\in [K]$ to control the type I error more generally. Thus, for a larger $K$, we expect that we can control the type I error more accurately. Indeed, we observe this in our experiments.}

{\bf Alternative hypothesis: signal-plus-noise model.} To study the consistency of the test, we will consider a sequence of alternative hypotheses in the signal-plus-noise model {with a deterministic signal $s_m$ and a random noise $N_m$}
$$X_m=s_m+N_m.$$
The null hypothesis is specified by $H_{m0}:s_m=0_{p_m}$, in which case $X_m=N_m$. The alternative hypothesis $H_{m1}$ is specified by a set $\Theta_{m1}\subset V_m$ of signals $s_m\in\Theta_{m1}$. We call $\Theta_m = \{0\} \cup \Theta_{m1}$ the parameter space. The alternative hypothesis is decisively \emph{not} invariant under $\mathcal{G}_m$. In fact, one can view the test statistic as detecting deviations from invariance. 

{We view the signal-plus-noise model as quite broad, and we will study a variety of examples as special cases. The breadth of the model arises from two aspects: First, one can choose the signal parameter space $\Theta_m$ to be quite general, for instance a linear subspace, a union of linear subspaces, a convex cone, etc. Second, one can model the family to which the distribution of the noise $N_m$ belongs; and our theory will rely on the symmetries of these distributions. Further, based on finite-dimensional asymptotic statistics, we know that asymptotically any sufficiently regular parametric model is well approximated by a normal observation model, which can be viewed as a signal-plus-noise model like ours if the noise distribution does not depend on the signal. 
}

{However, the scope of this model is limited in a few ways. It assumes a specific ``structural model'' for the data, and it is essentially a submodel of a multi-dimensional location family. For instance, it requires the distribution of the noise to be functionally independent on the unknown paramater $s_m$. In some cases, this may be approximately achieved via appropriate variance-stabilizing transforms. In our analysis, this is currently needed to be able to formulate consistency conditions based on only one global distribution of the noise. If the noise distribution can vary in parameter space, we expect that the behavior of randomization tests could be more complex.}  
{We discuss this and further limitations of our work in Section \ref{disc}.}


\subsection{General consistency}
\label{gencons}
{Our basic idea to establish consistency of randomization tests is to find conditions under which the test statistic under the alternative is much larger than the randomized test statistic, i.e., (informally) $f_m(s_m + N_m) \gg f_m(G_m[s_m+N_m])$. We wish to do this by introducing only broadly applicable assumptions. The first key step is to find a lower bound on $f_m(s_m + N_m)$. To achieve this, we make assumptions of $f_m$.}

{For a given constant $\psi>0$, we consider $\psi$-subadditive test statistics, i.e., functions $f_m:V_m\mapsto\R$ such that for all $a,b\in V_m$,
$$\psi \cdot f(a+b) \le f(a) + f(b).$$
Note that typically $\psi \le 1$.
In the current argument, we will use that $f_m(s_m + N_m) \ge \psi f_m(s_m)- f_m(-N_m)$. This allows us to lower bound the value $f_m(s_m + N_m)$ of the test statistic by a main term $\psi f_m(s_m)$ depending only on the signal, and an error term $- f_m(-N_m)$ depending only on the noise (which we will also control).
We will use a similar argument to upper bound the randomized test statistic $f_m(G_m[s_m+N_m])$.
These conditions are enough to guarantee the consistency of tests of the form $f_m(X_m)> \tilde c_m$ for appropriately chosen ``oracle'' critical values $\tilde c_m$ (which are not practically implementable in general); and we will compare the resulting conditions later in this section.\\
Examples of subadditive functions include:
\begin{enumerate}
\item Given any set $W_m \subset V_m$, the suprema of linear functionals
$$f_m(x) = \sup_{w_m\in W_m} w_m^\top x,$$
assumed to be finite-valued functions, are $1$-subadditive. These are the \emph{sublinear functionals} on $V_m$, see e.g., Sect 5.4, Ch 7, and specifically Exercise 7.103 in \cite{narici2010topological}. In particular, affine functions $f(x) = w^\top x + c$ are  $1$-subadditive for any $w\in V_m$ and any $c\ge 0$.
\item For instance, for any norm $\|\cdot\|$ on $V_m$ (with the dependence on $m$ suppressed), we can take $f_m(x) = \|x\|$ by choosing $W_m = \{w_m: \|w_m\|_*\le 1\}$, the unit ball in the dual norm $\|\cdot\|_*$ of $\|\cdot\|$.
\item When $V_m = \R$ is one-dimensional, for any concave non-decreasing function $c:[0,\infty)\to \R$ such that $c(0)\ge 0$, $f:\R\mapsto\R$ given by $f(x) = c(|x|)$ is $1$-subadditive. Examples include $f(x) =|x|^{q}$ for $q \in (0,1]$. See Section \ref{pf-psi} for the argument.
\item Convex functions of bounded growth: If $f:\R^p \to \R$ is convex and satisfies $\psi f(2x)\le 2 f(x)$, then $f$ is $\psi$-subadditive. Indeed, $f(a) + f(b) \ge 2 f([a+b]/2) \ge \psi f (a+b)$ by convexity and bounded growth. For instance, $f(x) = \|x\|_q^q$, for $q\ge 1$ satisfies $f(2x) = 2^q f(x)$, thus it is $2^{1-q}$-subadditive.
\end{enumerate}}

{
Non-examples include functions of very fast growth, for instance $f: \R\mapsto\R$, $f(x) = \exp(x)$. However, for the purposes of hypothesis testing, only the acceptance and rejection regions are relevant; and thus even for test statistics that are not subadditive, one may---on a case-by-case basis---find sub-additive test statistics with the same acceptance and rejection regions; where our theory can be applied. For instance, instead of the exponential map above, one may consider the identity map.  \\
Further, this class has a number of closure properties, being closed under:
\begin{enumerate}
\item Conic combinations: If $f_j:V_j\mapsto [0,\infty)$, $j\in [J]$ are $\psi_j$-subadditive, then for any $\tau_j\ge 0$, $j\in[J]$, $\sum_{j\in[J]} \tau_j f_j$ is $\min_{j\in [J]} \psi_j$-subadditive.
\item Maxima: If $f_j:V_j\mapsto [0,\infty)$, $j\in [J]$ are $\psi_j$-subadditive, 
then $\max_{j\in[J]} f_j$ is $J^{-1}\min_{j\in [J]} \psi_j$-subadditive.
\item Compositions with 1-D functions: if $f_1:[0,\infty)\mapsto \R$ is non-decreasing and $\psi_1$-subadditive; and  $f_2:\R^p\mapsto [0,\infty)$ is $1$-subadditive, then  $f_1 \circ f_2:\R^p\mapsto \R$ is $\psi_1$-subadditive. Indeed,
$$f_1 \circ f_2(x+y)\le f_1[f_2(x) +f_2(y)]
\le \psi_1^{-1}[f_1\circ f_2(x)+f_1\circ f_2(y)].$$
\end{enumerate}
}

Our first theorem is a general consistency result for randomization tests with $\psi$-subadditive test statistics.

\begin{theorem}[Consistency of randomization test]\label{cons} {Consider a sequence of models indexed by $m\ge1$, $m\in\mathbb{N}$, such that} the data $X_m \in V_m$ follow a $p_m$-dimensional signal-plus-noise model $X_m = s_m+N_m$, where $s_m\in \Theta_m$ {is deterministic} and $N_m$ is a random noise vector. Test the sequence of null hypotheses $H_{m0}:s_m=0$ against a sequence of alternative hypotheses $H_{m1}$ with signal vectors $s_m\in \Theta_{m1}$ for a fixed $\alpha\in(0,1]$.
Reject the null hypothesis using the randomization test \eqref{t0}.
{Let $f_m$ be $\psi$-subadditive.}
Assume the following:
\begin{compactenum}
\item {\bf Noise invariance}. The distribution of the noise is invariant under $\mathcal{G}_m$: $N_m =_d g_mN_m$ for all $g_m\in \mathcal{G}_m$.
\item {\bf Signal strength.} 
There is a sequence $(t_m)_{m\ge1}$, and 
for any sequence $(s_m)_{m\ge 1}$ such that for all $m\ge 1$, $s_m \in \Theta_{m1}$, 
there is another sequence $(\tilde t_m)_{m\ge1}$, 
$\tilde t_m=\tilde t_m(s_m)$, 
such that for all large enough integers $m$, 
\beq\label{cr}
{f_m(s_m)> \psi^{-2} \tilde t_m(s_m) + \psi^{-1}(\psi^{-1}+1)t_m}.
\eeq
Further, as $m \to \infty$,
\begin{compactenum}
\item {\bf Noise level.} {$P(f_m(N_m) \le t_m)\to 1$ and $P(f_m(-N_m) \le t_m)\to 1$.}
\item {\bf Bound on randomized statistic}. {The test statistics evaluated on the randomized signal fall below $\tilde t_m(s_m)$, i.e., for any sequence $(s_m)_{m\ge 1}$ such that for all $m\ge 1$, $s_m \in \Theta_{m1}$,}
$${P_{G_m\sim Q_m}(f_m(G_ms_m)\le \tilde t_m(s_m))\to 1}.$$
\end{compactenum}
\end{compactenum}
Under condition 1, the randomization test has level at most $\alpha$. Under conditions 1\& 2, the randomization test is consistent, {i.e., for the event $\mathcal{E}_m$ from \eqref{t0}, for any sequence $(s_m)_{m\ge 1}$ such that $s_m \in \Theta_{m1}$ for all $m \ge 1$, $\lim_{m\to\infty} P_{G_{m1}, \ldots, G_{mK}\sim Q_m, N_m}(\mathcal{E}_m)=1$}.
\end{theorem}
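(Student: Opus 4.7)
Level control under condition 1 is standard (e.g., Theorem 2 of Hemerik--Goeman cited above), so the content of the theorem is consistency. The plan is to exhibit an event of probability tending to one on which $f_m(X_m)$ is the strict maximum of the $K+1$ values $\{f_m(X_m), f_m(G_{m1}X_m),\ldots,f_m(G_{mK}X_m)\}$; this is a sufficient condition for $\mathcal{E}_m$ in \eqref{t0} whenever $K$ is large enough that $\lceil (1-\alpha)(K+1)\rceil \le K$, since then $f_m(X_m)$ strictly exceeds every quantile of those values. Thus it suffices to produce a lower bound for $f_m(X_m)$ and a uniform upper bound for the $K$ randomized statistics $f_m(G_{mi}X_m)$, and to compare them via the signal strength condition.

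\textbf{Two subadditive bounds.} The identity $\psi f_m(a+b) \le f_m(a) + f_m(b)$ is applied in two ways. Taking $a = X_m$ and $b = -N_m$, so that $a+b = s_m$, yields $f_m(X_m) \ge \psi f_m(s_m) - f_m(-N_m)$; by the noise-level condition 2(a), with probability tending to one this is at least $\psi f_m(s_m) - t_m$. Taking instead $a = G_{mi}s_m$ and $b = G_{mi}N_m$, so that $a+b = G_{mi}X_m$ by linearity of the representation, yields $f_m(G_{mi}X_m) \le \psi^{-1}[f_m(G_{mi}s_m) + f_m(G_{mi}N_m)]$. Noise invariance (condition 1) gives $G_{mi}N_m =_d N_m$, so condition 2(a) bounds $f_m(G_{mi}N_m)$ by $t_m$ and condition 2(b) bounds $f_m(G_{mi}s_m)$ by $\tilde t_m(s_m)$, each with probability tending to one. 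A union bound over the fixed number $K$ of transforms then gives $\max_{i\in[K]} f_m(G_{mi}X_m) \le \psi^{-1}(\tilde t_m(s_m) + t_m)$ with probability tending to one.

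\textbf{Combining.} Intersecting the two favorable events by one further union bound, on an event of probability tending to one the test rejects whenever $\psi f_m(s_m) - t_m > \psi^{-1}(\tilde t_m(s_m) + t_m)$; dividing by $\psi$ and collecting terms rearranges this to $f_m(s_m) > \psi^{-2}\tilde t_m(s_m) + \psi^{-1}(\psi^{-1}+1)t_m$, which is exactly the signal strength condition \eqref{cr}. Hence $P(\mathcal{E}_m)\to 1$, as required.

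\textbf{Expected obstacle.} The proof is essentially bookkeeping once the right subadditive decomposition is identified; the only real subtlety is carefully tracking the $\psi$ and $\psi^{-1}$ factors that appear on opposite sides of the inequality and combine to produce the specific threshold in \eqref{cr}. Notably, no independence reduction between $G_{mi}s_m$ and $G_{mi}N_m$ is needed for the general result, since the upper bound on $f_m(G_{mi}X_m)$ is a sum and a union bound on each summand suffices, even though the two share the common transform $G_{mi}$. The independence-style reduction mentioned in the introduction should instead be relevant for sharpening constants or establishing the specific worked examples later in the paper.
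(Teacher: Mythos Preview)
Your proposal is correct and follows the same overall architecture as the paper: reduce the quantile test to the max test (the paper does this via its Lemma~\ref{r1}), lower-bound $f_m(X_m)$ and upper-bound each $f_m(G_{mi}X_m)$ via $\psi$-subadditivity, and compare against the threshold \eqref{cr}.

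The one genuine difference is your handling of the randomized statistic. The paper invokes an Independence Lemma (its Lemma~\ref{il}) to show $G_m \indep G_mN_m$, and hence $f_m(G_mX_m) =_d f_m(G_ms_m + N_m)$ with $G_m$ and $N_m$ now independent; only then does it apply subadditivity. You instead apply subadditivity directly to $f_m(G_{mi}s_m + G_{mi}N_m)$ and observe that the marginal identity $G_{mi}N_m =_d N_m$ already gives $P(f_m(G_{mi}N_m)\le t_m)\to 1$, so a union bound on the two summands suffices regardless of their dependence through the shared $G_{mi}$. Your remark in the ``Expected obstacle'' paragraph is on point: at the level of Theorem~\ref{cons} the Independence Lemma is not actually needed, since the very next step in the paper's proof is also just a union bound on the two summands, which does not use independence. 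What the paper's reduction buys is a cleaner conceptual decoupling---one can think of $G_ms_m$ and $N_m$ as coming from separate probability spaces---but it does not change the constants or the conclusion here.
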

{
Some comments on the assumptions are in order:
\begin{enumerate}
\item The noise invariance condition is required to ensure the exact type I error control, as discussed above.
\item  Our analysis relies on comparing the size of the test statistic on the data and the randomized data. The sub-additivity assumption allows us to reduce this to comparing the size of the test statistics on the signal, the noise, and the randomized signal. The remaining conditions are meant to capture high-probability deterministic bounds on the statistic over the randomness in the remaining stochastic quantities: the noise and random group elements.
\item The sequence $t_m$ controls the size of the statistic $f_m$ evaluated on the noise $N_m$. The sequence $\tilde t_m(s_m)$ controls the size of the statistic evaluated on the randomized signal $G_m s_m$.
\end{enumerate}
}
See Section \ref{pfcons} for the proof, which is novel. For the consistency result, we proceed by a series of reductions, first reducing from the quantile test to a max-based test, then from considering several random transformations to only one transform, and then reducing from a dependent transformed signal and noise to independent ones.

{\bf Conventions.} To lighten notation, we will often omit the dependence of $\tilde t_m(s_m)$ on $s_m$, writing simply $\tilde t_m$. Further, when it is clear from context what the sequence of tests is, we will simply say that the ``test is consistent", as opposed to saying that the ``sequence of tests is consistent''.

{{\bf Consistency of deterministic test.}  As mentioned, $\psi$-subadditivity is enough to guarantee the consistency of tests of the form $f_m(X_m)> \tilde c_m$ for appropriately chosen critical values $\tilde c_m$. We state this result below and compare it as a  ``baseline" result with the conditions for the consistency of randomization tests.
\begin{proposition}[Consistency of deterministic  test]\label{consdet} In the setting of Theorem \ref{cons}, suppose that condition 2(a) holds, along with the following condition: 
\begin{compactenum}
\item {\bf Signal strength.} There is a sequence
 $(t_m)_{m\ge1}$ such that for all large enough integers $m\ge 1$,
\beq\label{cd}
f_m(s_m)>  2\psi^{-1}t_m.
\eeq
\end{compactenum}
Then, for any sequence
 $(\tilde c_m)_{m\ge1}$ such that $\tilde c_m \le t_m$ for all $m\ge 1$, the sequence of deterministic tests that rejects when $f_m(X_m)> \tilde c_m$ is consistent, i.e., $\lim_{m\to\infty} P_{H_{m1}}(f_m(X_m)> \tilde c_m)=1$.
\end{proposition}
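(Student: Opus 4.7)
The plan is to exploit $\psi$-subadditivity in a single well-chosen application, then use condition 2(a) to absorb the resulting noise term. Since $s_m = X_m + (-N_m)$, the $\psi$-subadditivity of $f_m$ applied to the pair $(X_m, -N_m)$ yields
\[
\psi \cdot f_m(s_m) \;=\; \psi \cdot f_m\bigl(X_m + (-N_m)\bigr) \;\le\; f_m(X_m) + f_m(-N_m),
\]
which I would rearrange to the lower bound $f_m(X_m) \ge \psi f_m(s_m) - f_m(-N_m)$. Note that I only need the ``negative noise'' control $P(f_m(-N_m) \le t_m) \to 1$ from condition 2(a); the bound on $f_m(N_m)$ is unused here (it is relevant for the randomization test).

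Next I would combine this with the signal strength hypothesis $f_m(s_m) > 2\psi^{-1} t_m$, which is equivalent to $\psi f_m(s_m) > 2 t_m$. On the event $\mathcal{A}_m := \{f_m(-N_m) \le t_m\}$, which by condition 2(a) has probability tending to $1$, I obtain
\[
f_m(X_m) \;\ge\; \psi f_m(s_m) - f_m(-N_m) \;>\; 2 t_m - t_m \;=\; t_m \;\ge\; \tilde c_m,
\]
using the assumption $\tilde c_m \le t_m$ at the last step. Therefore $P_{H_{m1}}\bigl(f_m(X_m) > \tilde c_m\bigr) \ge P(\mathcal{A}_m) \to 1$, which establishes consistency.

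There is no real obstacle here: the proposition is essentially the deterministic shadow of Theorem \ref{cons}, and $\psi$-subadditivity already packages exactly the right inequality. The only subtlety worth flagging in writing is to apply subadditivity to the decomposition $s_m = X_m + (-N_m)$ rather than $X_m = s_m + N_m$, so that the resulting noise-dependent error term is $f_m(-N_m)$ (which is what condition 2(a) controls) rather than something involving $f_m(X_m)$ on the wrong side of the inequality.
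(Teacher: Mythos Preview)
Your proof is correct and matches the paper's own argument essentially line for line: both apply $\psi$-subadditivity to get $f_m(X_m)\ge \psi f_m(s_m)-f_m(-N_m)$, invoke condition 2(a) to replace $f_m(-N_m)$ by $t_m$, and then use the signal-strength inequality $\psi f_m(s_m)>2t_m$ together with $\tilde c_m\le t_m$. Your observation that only the bound on $f_m(-N_m)$ is needed here is accurate and worth keeping.
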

See Section \ref{pfcons2} for the proof. To ensure type I error control at level $\alpha$, the sequence  $(\tilde c_m)_{m\ge1}$ needs to be chosen such that $\sup_{P\in H_{m0}} P(f_m(X_m)> \tilde c_m) \le \alpha$. As we discussed, this can be difficult when the class of null hypotheses is large and has many nuisance parameters. Thus, the deterministic test may not be practically implementable. 
However we can still consider it as an idealized ``baseline", to understand the conditions on the signal strength that our approach provides to ensure consistency. 
Comparing the conditions for data signal strength, \eqref{cr} and \eqref{cd}, and recalling that typically $\psi \le 1$, we see that the requirement for the randomization test is stronger. The factor in front the noise level $t_m$ is larger, and in addition the randomization test also has the additional term $\psi^{-2} \tilde t_m$ controlling the size of the randomized signal. \\
Thus, our requirements for the randomization test are more stringent. However, as explained above, the randomization test requires a method to set the critical value, which may be very hard or impossible in practice in certain problems where the null hypothesis is very large. 
}

{\bf  Nuisance parameters.} We next develop a generalization of our consistency results allowing nuisance parameters. This allows handling problems such as two-sample testing where the global mean is a nuisance.
Let $X_m = \nu_m + s_m + N_m$, where $\nu_m$ is a nuisance parameter, $s_m\in \Theta_m$ is the signal. Suppose $\nu_m$ belongs to a known linear space $U_m$, $\nu_m \in U_m$. 
We can reduce this to the previous setting by projecting into the orthogonal complement of $U_m$. Let $P_m=P_{U_m^\perp}$ be the orthogonal projection operator into the orthogonal complement of $U_m$. Then $P_m\nu_m=0$, so by projecting with $P_m$, we have
$$P_mX_m = P_ms_m + P_mN_m.$$

Let $\tilde{X}_m = P_mX_m$ be the new observation, $\ts_m = P_ms_m$ be the new signal, and $\tN_m =P_mN_m$ be the new noise. 
Then, this reduces to the standard signal-parameter model, with the signal parameter space $\tTheta_m = P_m\Theta_m = \{P_ms_m: s_m\in\Theta_m\}$, and a new induced noise distribution.

\subsection{Review of tools to obtain concrete results}
To analyze concrete examples, we will rely on a few technical tools, reviewed in the following sections.
\subsubsection{Rate optimality}
\label{ro}
{
In this section, we review some basic results on minimax rate optimality for hypothesis testing that we will use, focusing on Ingster's (or the chi-squared) method \citep{ingster1987minimax,ingster2012nonparametric}.
This result allows randomized tests $\phi: V_m \mapsto [0,1]$, where $\phi(x)$ is the probability of rejecting the null for data $x$.
Denote the set of all level $\alpha \in (0, 1)$ tests by
$$\Phi_{m}(\alpha) = \left\{\phi: V_m \mapsto [0,1] : \sup_{P \in H_{m0}} \E_{P}[\phi] \leq \alpha \right\}.$$
Define the minimax type II error as 
$$R_{m} = \inf_{\phi\in\Phi_{m}(\alpha)} \sup_{P \in H_{m1}} \mathbb{E}_{P}[1-\phi].$$
     Suppose that $P_{m0} \in H_{m0}$ and $P_{m1}$, $\dots, P_{mM_m} \in H_{m1}$.
     Define the average likelihood ratio between $P_{m0}$ and $P_{m1}$, $\dots, P_{mM_m}$ as
     $$L_m = \frac{1}{M_m} \sum_{i = 1}^{M_m} \frac{p_{mi}(X_{m})}{p_{m0}(X_{m})},$$
     where $p_{mi}$, $i \in [M_m]\cup\{0\}$ are, respectively, the densities of $P_{mi}$, $i \in [M_m]\cup\{0\}$ with respect to a common dominating sigma-finite measure  on $V_m$.
Then, it is well known (see e.g., \cite{ingster2012nonparametric}, and Section III.B of \cite{banks2018information} for a very clear statement) that to achieve consistency, i.e., to have $R_m\to 0$, we must have $\lim_{m\to\infty}$ $\V_{P_{m0}}[L_m] = \infty$.\\
A further key result holds when the null distribution $P_{m0}$ is $\N(0,I_{p_m})$ and the alternative $H_{m1}$ contains distributions of the form $\N(s_m,I_{p_m})$, for $s_m \in \Theta_{m1}$. Consider a prior $\Pi_m$ on $\Theta_{m1}$. Then, we have, see e.g., \cite{ingster2012nonparametric} or Lemma 1 of \cite{banks2018information}, for two independent copies $S,S'\sim \Pi_m$,
\beq\label{vlr}
\V_{P_{m0}}[L_m]  = \E_{S,S'\sim \Pi_m} \exp(S^\top S').
\eeq
}

\subsubsection{Tail bounds of random variables}
\label{tail}
{
We recall some well known tail bounds for random variables.
Suppose that for all $m \ge 1$ and $i\in [m]$, $Z_{i}$ are iid random variables with a probability distribution $\pi$. Let
$F_\pi(t,n) = P(|n^{-1}\sum_{i=1}^{n} Z_{i}|> t)$, with $Z_i\sim \pi$ iid for all $i\in [n]$. \\
   There is a vast number of well-known results on tail bounds of sums of iid random variables under a variety of conditions, see e.g., \cite{petrov2012sums,boucheron2013concentration,vershynin2018hdp}, etc. Each of these can be used together with our framework to obtain consistency results. In a very rough order of increasing generality:}
 {
   \benum
    \item The tail of sums of \emph{sub-exponential random variables} (including  \emph{sub-gaussian and bounded variables}) can be controlled via Bernstein-type inequalities, which lead to $F_\pi(t,n) \le $ $C\exp(-cn\min\{t,t^2\})$ for some $C,c$ depending only on $\pi$ \citep{vershynin2018hdp}. Bernstein-Orlicz random variables interpolate between sub-Gaussian and sub-exponential random variables \citep{van2013bernstein}.
   \item There are various \emph{Orlicz norms} for random variables, and corresponding tail bounds, for instance for random variables with tail decay of order roughly $\exp(-x^\alpha)$, $\alpha >0$ (which have all polynomial moments but for $\alpha<1$ have no moment generating function) \citep{chamakh2020orlicz}, or of order roughly $\exp(-\ln[x+1]^\kappa)$ for $\kappa>0$ (which have all polynomial moments but no moment $\E_{X\sim \pi} \exp(|X|^c)$, $c>0$ \citep{chamakh2021orlicz}.\\
   For instance, the results of \cite{chamakh2021orlicz} imply the following. Consider $\Psi:\R^+ \mapsto \R^+$, $\Psi(x) = \exp(\ln[x+1]^\kappa)-1$, and for a random vector $Z$, the $\Psi$-Orlicz ``norm''\footnote{This may nor may not satisfy the triangle inequality, see \cite{chamakh2021orlicz} for discussion.}
   $\|Z\|_\Psi = \inf\{c>0: \E \Psi(\|Z\|/c)\le 1\}$. Then, for iid random variables $Z_1,\ldots,Z_n\sim \pi$, with finite $\Psi$-Orlicz norm and finite variance, $F_\pi(t,n) \le 2 \exp(-\ln[Cn_m^{1/2}t+1]^\kappa)$ for some $C$ depending on $\pi$, see the remark after Corollary 2.3 of \cite{chamakh2021orlicz}. 
   \item For random variables with \emph{finitely many polynomial moments}, one has Khintchine-type inequalities \citep{petrov2012sums,boucheron2013concentration}, as well as Rosenthal- and Fuk-Nagaev-type inequalities \citep{rio2017constants,marchina2019rate}.
   \item For more heavy-tailed random variables with only a variance, Chebyshev's inequality applies to the sample mean, but there are tighter tail bounds for other mean estimators, see e.g., \cite{catoni2012challenging,lugosi2021robust,lugosi2019mean}.
   \eenum  
}

\subsubsection{Bernoulli processes}
{
Here we review the definition of Bernoulli processes, which we will use later in our consistency results. For any positive integer $q$, a subset $T$ of $\R^q$, and a vector $b=(b_1,\ldots, b_p)$ of independent Rademacher random variables, $t\mapsto t^\top b$ is referred to as a Bernoulli process (also called a Rademacher process, especially in learning theory) see e.g., \cite{boucheron2013concentration,talagrand2014upper}.\\ 
 In this case, for any function class $F_m = \{f_m^* = (f_{m,1}, \ldots, f_{m,n_m})\}$, such that each $f_{m,i}:\R^{p_m} \mapsto \R$ is an odd function, 
and any random vectors $N_m=(N_{m,1}, \ldots, N_{m,n_m})$ that are mutually independent and sign-symmetric, i.e., $N_{m,j}=_d-N_{m,j}$ for all $j\in [n_m]$,
for iid signflips $b_1, \ldots, b_{n_m}$, conditonal on $N_{m,i} \in \{\pm N_{m,i}^0\}$ for fixed $N_{m,i}^0$, $i\in [n_m]$,
the randomization distribution $(b_1N_{m,1}, \ldots, b_{n_m}N_{m,n_m})$
for test statistics of the form 
 $$f_m(N_m)=\sup_{f_m^*\in F_m} \sum_{i=1}^{n_m} f_{m,i}(N_{m,i})$$
is a Bernoulli process. Indeed, one can take $q = n_m$, and the index set $T = \{(f_{m,1}(N_{m,1}^0)$, $\ldots$, $f_{m,n_m}(N_{m,n_m}^0)): f_m^*\in F_m\}$.\\
 The fundamental result for bounding expectations of suprema of Bernoulli processes is the Bednorz-Latala theorem \citep{bednorz2013suprema}, see also Proposition 5.14 \& Theorem 5.1.5 in \cite{talagrand2014upper} for an expository presentation. Consider a subset $T$ of $\R^q$ for some $q>0$ and a vector $b=(b_1,\ldots, b_q)$ of iid Rademacher random variables. Then, for $Z\sim \N(0,I_q)$, the Bernoulli complexity of $T$ is characterized as
 \beqs
 b(T):=\E\sup_{t\in T} t^\top b \sim \inf\left\{\E\sup_{ t\in T_1} t^\top Z + \sup_{t\in T_2}\|t\|_1:\, T\subset T_1+T_2\right\}.
 \eeqs
 In turn, the Gaussian complexity $\E\sup_{ t\in T_1} t^\top Z$ is characterized up to constants by the generic chaining \citep{talagrand2014upper}.}

{
 Further, Bernoulli processes concentrate around their mean with a sub-Gaussian tail: assuming $T\subset B(t_0,\sigma)$ (where $B(x,r)$ is the $\ell_2$ ball of radius $r$ centered at $x$), for any $u>0$, 
 \beqs
 P(|\sup_{t\in T} t^\top b-b(T)| \ge u) \le c\exp(-cu^2/\sigma^2),
 \eeqs
  for a universal constant $c$, see Theorem 5.3.2 in \cite{talagrand2014upper}. We define the infinum of the radii of all $\ell_2$ balls containing the set $T$ as the radius $r(T)$ of $T$. Further, for any scalar $l$, we denote
  \beq\label{u+}
  U^+(T,l):=b(T) + l \cdot r(T).
  \eeq
  The above results imply that, for any sequence of positive integers $(q_m)_{m\ge1}$, any sequence of sets $(T_m)_{m\ge 1}$ with $T_m \subset \R^{q_m}$, and any sequence $(l_m)_{m\ge1}$ such that $l_m>0$ for all $m$ and $l_m \to \infty$ as $m\to \infty$,
$P(\sup_{t\in T_m} t^\top b \le U^+(T_m,l_m))\to 1.$
  In principle, these results provide basic tools to control the tails of Bernoulli processes. However they can require some work to use in specific cases; thus more specific results (which we will discuss later) are of interest.	
}

\section{Examples}
In this section we apply our theory to several important statistical problems. Our results allow us to determine consistency conditions in a broad range of settings.
\subsection{Detecting sparse vectors}
\label{se1}
Our first example is the fundamental statistical problem of sparse vector detection. We make $n_m$ noisy observations $X_{m,i}$, $i=1,\ldots,n_m$ of a signal vector $s_m$. We assume that the signal vector is either zero, or ``sparse" in the sense that it has only a few nonzero coordinates. We are interested to detect---or test---if there is indeed a nonzero signal buried in the noisy observations. 
This is challenging due to the potentially large and unknown level of noise. Randomization tests can be useful, because they do not require the user to know the level of noise. Indeed, they only require one to know some symmetries of the noise, and automatically adapt to the other nuisance parameters such as the noise level.

{Formally, we observe $n_m$ vectors $X_{m,i}=s_{m}+N_{m,i}$, $i=1,\ldots,n_m$ of dimension $p_m$, which are sampled from a signal-plus-noise model. 
We arrange them into an $n_m \times p_m$ matrix $X_m$, which has the form $X_m =1_{n_m}s_m^\top+N_m$.  
We are interested to detect ``sparse" vectors $s_m$; more specifically, we are interested to test against $s_m$ with a large $\ell_\infty$ norm $ \|s_m\|_\infty$. 
We use the test statistic $f_m(X_m) = n_m^{-1}\|1_{n_m}^\top X_m\|_\infty$.} 

\subsubsection{Sign-symmetric noise}
\label{sign-noise}

{Based on specific assumptions on the noise, various different randomization tests are valid. To illustrate our theory, we will make the relatively weak non-parametric assumption that the noise vectors $(N_{m,i})_{i\in[n_m]}$ are mutually independent, and the distribution of each noise vector $N_m$ is sign-symmetric, independently of all other noise vectors, i.e., for any vector $b\in \{\pm1\}^{n_m}$, $(N_{m,1}, \ldots, N_{m,n_m}) =_d (b_1N_{m,1}, \ldots, b_{n_m}N_{m,n_m})$.}

{We consider the randomization test from equation \eqref{t0}, where we randomly flip the sign of the datapoints $K$ times using diagonal matrices $B_{m,i}$, $i=1,\ldots,K$, with iid Rademacher entries on the diagonal. We have the following result.
}

{\begin{proposition}[Consistency of randomization test for sparse vector detection]\label{e1}
Let $X_{m,i}=s_{m}+N_{m,i}$, $i=1,\ldots,n_m$, where $s_m$ are $p_m$-dimensional signal vectors and $N_{m,i}$, $i=1,\ldots,n_m$, are mutually independent vectors such that $N_{m,i}=_d-N_{m,i}$. 
As $m\to\infty$, the sequence of randomization tests \eqref{t0} of the sequence of null hypotheses
$s_m=0$, with statistics $f_m(X_m)=n_m^{-1}\|1_{n_m}^\top X_m\|_\infty$ and randomization distribution uniform over $n_m\times n_m$ diagonal matrices with independent Bernoulli entries is consistent 
against the sequence of alternatives with $s_{m} \in \Theta_{m1}$,
if there is a sequence $(t_m)_{m\ge 1}$ such that with probability tending to unity, $\|n_m^{-1}\sum_{i=1}^{n_m} N_{m,i}\|_\infty\le t_m$, and
 for any sequence $(s_m)_{m\ge 1}$ such that for all $m\ge 1$, $s_m \in \Theta_{m1}$,
\beq\label{cg}\liminf_{m\to\infty}\frac{\|s_m\|_\infty}{2t_m}>1.
\eeq
\end{proposition}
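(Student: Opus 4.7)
The plan is to apply the general consistency Theorem \ref{cons} with $V_m = \R^{n_m \times p_m}$, signal matrix $S_m = 1_{n_m} s_m^\top$, noise matrix $N_m$ whose rows are the $N_{m,i}^\top$, and the sign-flip group acting by left multiplication, $B_m \cdot X := B_m X$ for diagonal $B_m$ with $\pm 1$ entries. The statistic $f_m(X) = n_m^{-1}\|1_{n_m}^\top X\|_\infty$ is a seminorm on $V_m$ and therefore $1$-subadditive, so I take $\psi = 1$ throughout.

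Checking the hypotheses of Theorem \ref{cons} is mostly direct. Noise invariance follows from mutual independence and sign-symmetry of the rows $N_{m,i}$: for any $b\in \{\pm 1\}^{n_m}$ the joint law of $(b_i N_{m,i})_i$ equals that of $(N_{m,i})_i$, so $B_m N_m =_d N_m$. Since $f_m$ is a norm, $f_m(-N_m) = f_m(N_m) = \|n_m^{-1}\sum_i N_{m,i}\|_\infty$, and by hypothesis this is $\le t_m$ with probability tending to $1$, so the noise-level condition holds. Also $f_m(S_m) = \|s_m\|_\infty$ since $1_{n_m}^\top 1_{n_m} = n_m$.

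The only probabilistic work is bounding $f_m$ on the randomized signal. For $B_m = \mathrm{diag}(b_1,\ldots,b_{n_m})$ with i.i.d.\ Rademacher $b_i$, the matrix $B_m S_m$ has rows $b_i s_m^\top$, whence
$$f_m(B_m S_m) = \Bigl|n_m^{-1}\sum_{i=1}^{n_m} b_i\Bigr| \cdot \|s_m\|_\infty.$$
Choosing any sequence $\epsilon_m \to 0$ with $n_m \epsilon_m^2 \to \infty$ (for concreteness, $\epsilon_m = n_m^{-1/4}$) and setting $\tilde t_m(s_m) := \epsilon_m \|s_m\|_\infty$, Hoeffding's inequality yields $P(f_m(B_m S_m) \le \tilde t_m) \ge 1 - 2\exp(-n_m \epsilon_m^2/2) \to 1$. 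The signal-strength inequality \eqref{cr} with $\psi=1$ then reads $\|s_m\|_\infty > \epsilon_m \|s_m\|_\infty + 2 t_m$, i.e., $\|s_m\|_\infty/(2t_m) > 1/(1-\epsilon_m)$, which holds for all large $m$ since $\epsilon_m \to 0$ and $\liminf \|s_m\|_\infty/(2t_m) > 1$ by assumption. Theorem \ref{cons} then delivers consistency.

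The one conceptual step is recognizing that the randomized signal $B_m S_m$ is a rank-one object whose statistic collapses to a scalar Rademacher mean scaled by $\|s_m\|_\infty$, so the signal-interference term $\tilde t_m$ is of order $n_m^{-1/2}\|s_m\|_\infty = o(\|s_m\|_\infty)$ and does not inflate the threshold beyond the factor of $2$ already present in Theorem \ref{cons}. I do not anticipate any significant obstacle; the remainder of the argument is bookkeeping against the hypotheses.
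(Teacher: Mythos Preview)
Your proposal is correct and follows essentially the same route as the paper's proof: both apply Theorem \ref{cons} with $\psi=1$, observe that $f_m(B_m S_m)=\bigl|n_m^{-1}\sum_i b_i\bigr|\,\|s_m\|_\infty$, use Hoeffding to take $\tilde t_m=\epsilon_m\|s_m\|_\infty$ with $\epsilon_m\to 0$ and $n_m\epsilon_m^2\to\infty$, and then reduce the signal-strength inequality to $\|s_m\|_\infty/(2t_m)>1/(1-\epsilon_m)$. The only cosmetic difference is the parametrization (the paper writes $\epsilon_m=(a_m/[2n_m])^{1/2}$ with $a_m\to\infty$); note that both arguments tacitly require $n_m\to\infty$.
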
}

    {See Section \ref{pfe1} for the proof. Roughly speaking, this result shows the consistency of the signflip-based randomization test when the signal strength is at least ``twice above the noise level", as formalized in equation \eqref{cg}. Intriguingly, Proposition \ref{consdet} leads to the same condition; thus suggesting that the additional noise created by randomization is small in this case.}

{{\bf Obtaining consistency results.} Therefore, {obtaining specific consistency results} boils down to controlling $\|n_m^{-1}\sum_{i=1}^{n_m} N_{m,i}\|_\infty$, the $\ell_\infty$ norm of a mean of potentially non-iid random vectors. This can be accomplished under a variety of conditions, and has been widely studied in the areas of concentration inequalities and empirical processes. 
We need to find $t_m$ such that $\|n_m^{-1}\sum_{i=1}^{n_m} N_{m,i}\|_\infty\le t_m$ holds with probability tending to unity. \\
Consider first the simplest setting:  for all $m \ge 1$ and $i\in [m]$, $N_{m,i}$ are iid and have $p_m$ iid coordinates sampled from a probability distribution $\pi$. Then by a union bound, the required condition holds with probability least $1-p_m F_\pi(t_m; n_m)$, where
$F_\pi(t,n) = P(|n^{-1}\sum_{i=1}^{n} Z_{i}|> t)$, with $Z_i\sim \pi$ iid for all $i\in [n]$. 
To ensure consistency, it is thus enough if $t_m$ is such that $\lim_{m\to\infty} p_m F_\pi(t_m; n_m) = 0$.
The tail bounds from Section \ref{tail} imply the following:}
{
 \benum
   \item For \emph{sub-exponential random variables} (including  \emph{sub-gaussian and bounded variables}), Bernstein-type inequalities  imply
    $\lim_{m\to\infty}$ $p_m$ $F_\pi(t_m; n_m) = 0$ if $t_m \sim \sqrt{(\log p_m)/n_m}$, assuming $t_m\le 1$. 
   \item For random variables with a finite $\Psi$-Orlicz norm and finite variance, where $\Psi:\R^+ \mapsto \R^+$, $\Psi(x) = \exp(\ln[x+1]^\kappa)-1$, the results of \cite{chamakh2021orlicz} imply that
   $\lim_{m\to\infty}$ $p_m$ $F_\pi(t_m; n_m) = 0$ if $t_m \sim \exp[(\log p_m)^{1/\kappa}]/\sqrt{n_m}$. 
   \eenum   }

 {{\bf Non-iid noise vectors with possibly dependent entries.} Beyond the simplest setting of iid noise vectors with iid entries, one can consider more general, non-identically distributed noise vectors with possibly dependent entries. The  sign-symmetry requirement $N_m:=(N_{m,1}, \ldots, N_{m,n_m}) =_d (b_1N_{m,1}, \ldots, b_{n_m}N_{m,n_m})$ for the validitity of the randomization test is equivalent to taking an arbitrary random vector $N_m^0=(N_{m,1}^0, \ldots, N_{m,n_m}^0)$, and then multiplying each $N_{m,j}^0$, $j\in [n_m]$, by an independent Rademacher random variable.\\
 To bound the tail of such a test statistic $f_m(N_m)$ for arbitrary noise distribution, one general approach is to first condition on the ``orbit'' of $N_m$ under the signflip group, $G(N_m)  = \{(v_1N_{m,1}, \ldots, v_{n_m}N_{m,n_m}), v\in \{\pm1\}^{n_m}\}$, apply a bound accounting for the random signflips (possibly using bounds on Bernoulli processes), and finally control the resulting tail bound over the unconditional distribution of $N_m$. 
 }


{\bf Rate-optimality.} 
{
Next, using tools from Section \ref{ro}, we discuss certain rate-optimality results for the randomization tests discussed in this section.
In the setting of Proposition \ref {e1}, consider $P_{m0}$ specifying the distribution of the noise $N_m$, and $P_{mi} \in H_{m1}$, $i=1,\ldots, M_m$. 
Then 
  $$L_m = \frac{1}{M_m} \sum_{j = 1}^{M_m} \frac{p_{mj}(X_{m})}{p_{m0}(X_{m})}
   = \frac{1}{M_m} \sum_{j = 1}^{M_m} \frac{p_{m0}(X_m-1_{n_m}S_{mj}^\top)}{p_{m0}(X_{m})}
   = \frac{1}{M_m} \sum_{j = 1}^{M_m} \prod_{i=1}^{n_m} \frac{p_{m,i,0}(X_{m,i}-S_{mj})}{p_{m,i,0}(X_{m,i})}.$$
Suppose all $N_{m,i}$ have equal distribution, with $p_m$ iid coordinates with density $\pi$. Let $M_m = p_m$, and  $S_{mj} = \tau_m \cdot e_j$, where $e_j$ is the $j$-th standard basis vector, and $\tau_m>0$ will be chosen below. Then
  $$L_m
   = \frac{1}{p_m} \sum_{j = 1}^{p_m} \prod_{i=1}^{n_m} \frac{\pi(X_{m,i,j}-\tau_m)}{\pi(X_{m,i,j})}.$$
 Thus,
   $$\V{L_m}
   = \frac{1}{p_m}\V\left[\prod_{i=1}^{n_m} \frac{\pi(X_{m,i,1}-\tau_m)}{\pi(X_{m,i,1})}\right]
   =\frac{1}{p_m}\left\{\left(\V_{Z\sim \pi}\left[\frac{\pi(Z-\tau_m)}{\pi(Z)}\right]+1\right)^{n_m}-1\right\}.$$
 Under appropriate regularity conditions in parametric statistical models  
   $$\V_{Z\sim \pi}\left[\frac{\pi(Z-\tau_m)}{\pi(Z)}\right]
   =\chi^2(\pi(\cdot-\tau_m),\pi) = I_\pi\cdot \tau_m^2 + o(\tau_m^2),$$
   where $I_\pi  = \int \pi'(x)^2/\pi(x) dx$ is the Fisher information of $\pi$ \cite[see e.g.,][Theorem 7.12.]{polyanskiy2019info}. Consistency requires that $\lim_{m\to\infty}\V_{P_{m0}}[L_m]= \infty$, so that for any $C>0$,  $\lim_{m\to\infty} (1+I_\pi\tau_m^2)/\log(Cp_m+1) \ge 1$. Thus, the minimal signal strength required for detection is at least $\sim\sqrt{\log(p_m)/n_m}$. For sub-exponential random variables, this shows that the signflip randomization test is rate-optimal in this case.\\
To summarize this discussion, we can formulate the following result:
\begin{proposition}[Rate-optimality of signflip test for sparse vector detection]\label{e1r}
Under the assumptions of Proposition \ref{e1}, suppose that $N_{m,i}$, $i=1,\ldots,n_m$, have iid entries from a distribution $\pi$ that is sub-exponential and symmetric about zero. 
Let $\Theta_{m1}(\tau_m) = \{s_m \in \R^{p_m}: \|s_m\|_\infty \ge \tau_m\}$. 
The sequence of signflip-based randomization tests \eqref{t0} of the sequence of null hypotheses
$s_m=0$ from Proposition \ref{e1} 
is 
consistent 
against the sequence of alternatives with $s_{m} \in \Theta_{m1}(\tau_m)$
when $\tau_m = C \sqrt{\log(p_m)/n_m}$ for a sufficiently large constant $C>0$.
Moreover, when $\tau_m = o(\sqrt{\log(p_m)/n_m})$, there is no consistent sequence of tests of $s_m=0$ against $s_{m} \in \Theta_{m1}(\tau_m)$.
\end{proposition}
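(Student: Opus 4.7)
\textbf{Proof plan for Proposition \ref{e1r}.} The plan is to treat the two claims separately: consistency of the signflip test follows from Proposition \ref{e1} combined with a standard sub-exponential tail bound, and the negative statement follows from the chi-squared (Ingster) method applied to a prior concentrated on one-hot sparse signals, essentially finishing the computation already laid out in the paragraph preceding the proposition.

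\textbf{Upper bound.} First I would invoke Proposition \ref{e1}. The only non-trivial condition to verify is the existence of a sequence $(t_m)_{m\ge 1}$ such that $\|n_m^{-1}\sum_{i=1}^{n_m} N_{m,i}\|_\infty \le t_m$ with probability tending to unity, together with $\liminf_{m\to\infty} \tau_m/(2 t_m) > 1$. Since each coordinate $n_m^{-1}\sum_{i=1}^{n_m} N_{m,i,j}$ is the mean of $n_m$ iid sub-exponential variables that are symmetric (hence mean zero), Bernstein's inequality yields, for a constant $c_\pi>0$ depending only on $\pi$,
\[
P\bigl(\bigl|n_m^{-1}\textstyle\sum_{i=1}^{n_m} N_{m,i,j}\bigr| > t\bigr) \le 2\exp\bigl(-c_\pi n_m \min\{t,t^2\}\bigr).
\]
A union bound over $j\in[p_m]$ then gives $\|n_m^{-1}\sum_i N_{m,i}\|_\infty \le t_m$ with probability tending to one for $t_m = c_\pi' \sqrt{\log(p_m)/n_m}$, provided $\log p_m = o(n_m)$ so that $t_m\to 0$ and the quadratic regime of Bernstein applies; if instead $\log p_m \gtrsim n_m$, the conclusion is vacuous since the scale $\sqrt{\log p_m/n_m}$ fails to vanish and the problem is trivially non-degenerate. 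Taking the constant $C$ in the proposition to satisfy $C > 2 c_\pi'$ gives $\liminf \tau_m/(2 t_m) > 1$, and Proposition \ref{e1} delivers consistency.

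\textbf{Lower bound.} Here I would apply the chi-squared method from Section \ref{ro}. Let $P_{m0}$ be the law of $N_m$ (iid entries from $\pi$), and for each $j\in[p_m]$ let $P_{mj}$ be the law of $1_{n_m} S_{mj}^\top + N_m$ with $S_{mj}=\tau_m e_j \in \Theta_{m1}(\tau_m)$. Putting the uniform prior $\Pi_m$ on $\{S_{mj}\}_{j=1}^{p_m}$, the display preceding the proposition already gives
\[
\V_{P_{m0}}[L_m] = \frac{1}{p_m}\Bigl\{\bigl(1 + \chi^2\bigl(\pi(\cdot-\tau_m),\pi\bigr)\bigr)^{n_m}-1\Bigr\}.
\]
Under the standard regularity of $\pi$ tacitly in force here (absolute continuity, differentiability in quadratic mean, finite Fisher information $I_\pi$), one has $\chi^2(\pi(\cdot-\tau_m),\pi) = I_\pi \tau_m^2(1+o(1))$ as $\tau_m \to 0$. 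Hence if $n_m \tau_m^2 = o(\log p_m)$, then
\[
\log\bigl(1+ \V_{P_{m0}}[L_m]\bigr) \le n_m \log\bigl(1 + I_\pi \tau_m^2(1+o(1))\bigr) - \log p_m \to -\infty,
\]
so $\V_{P_{m0}}[L_m] \to 0$. By the necessary condition from Section \ref{ro}, the minimax type II error $R_m$ stays bounded away from zero; equivalently, no sequence of tests can be consistent against $s_m \in \Theta_{m1}(\tau_m)$.

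\textbf{Main obstacle.} The most delicate point is making the chi-squared expansion $\chi^2(\pi(\cdot-\tau_m),\pi)\sim I_\pi\tau_m^2$ rigorous from only the stated assumptions (sub-exponential, symmetric about zero). Sub-exponentiality alone does not imply this smoothness-type conclusion; one needs $\pi$ to be regular enough in the sense of classical LAN theory, which is implicit in how the result is stated and in the computation already carried out in the excerpt. Assuming this regularity, the two steps above deliver matching upper and lower bounds at the rate $\sqrt{\log(p_m)/n_m}$.
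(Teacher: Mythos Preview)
Your proposal is correct and follows essentially the same route as the paper: the upper bound via Proposition \ref{e1} plus Bernstein and a union bound, and the lower bound via Ingster's method with the uniform prior on $\{\tau_m e_j\}_{j\in[p_m]}$, reducing to the local expansion $\chi^2(\pi(\cdot-\tau_m),\pi)\sim I_\pi\tau_m^2$. You also correctly flag the same implicit regularity assumption the paper relies on.

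One small algebraic slip: the displayed inequality $\log(1+\V_{P_{m0}}[L_m]) \le n_m\log(1+I_\pi\tau_m^2(1+o(1)))-\log p_m$ is not literally true (take $p_m>1$ and $\chi^2=0$). What you want is simply $\V_{P_{m0}}[L_m]\le p_m^{-1}(1+\chi^2)^{n_m}=\exp\bigl(n_m\log(1+I_\pi\tau_m^2(1+o(1)))-\log p_m\bigr)\to 0$, which gives the same conclusion.
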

}

\subsubsection{Spherical noise}
\label{sph-vec}

{We also study the case of spherical noise. Since the symmetry group of the noise is larger, it turns out that is enough to have a single observation $X_m = s_m + N_m \in \R^{p_m}$ to obtain a consistent test for a reasonable signal strength.}
We consider the randomization test from equation \eqref{t0}, with a randomization distribution that rotates the data $K$ times using uniformly chosen rotation matrices $O_{m,i}\in O(p_m)$, $i=1,\ldots,K$.

\begin{proposition}[Consistency of orthogonal randomization test for sparse vector detection]\label{e1s}
Let $X_m = s_m+N_m$, where $X_m,s_m,N_m$ are $p_m$-dimensional vectors and {$N_m$ has a spherical distribution}. As $m\to\infty$, the sequence of randomization tests \eqref{t0} with statistics $\|X_m\|_\infty$ and randomization distributions uniform over $O(p_m)$ is consistent {against the sequence of alternatives with $s_{m} \in \Theta_{m1}$, if there is a sequence $(t_{m,2})_{m\ge 1}$ such that with probability tending to unity, $\|N_m\|_2\le t_{m,2}$, and  for any sequence $(s_m)_{m\ge 1}$ such that for all $m\ge 1$, $s_m \in \Theta_{m1}$,
\beq\label{svdr}
\liminf_{m\to\infty}\frac{\|s_m\|_\infty/(2\log p_m)^{1/2}}{\left(\|s_m\|_2 + 2t_{m,2}\right)/p_m^{1/2} }>1.
\eeq
}
\end{proposition}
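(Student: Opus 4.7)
The plan is to apply the general consistency result, Theorem \ref{cons}, with the test statistic $f_m(x) = \|x\|_\infty$. Since $\|\cdot\|_\infty$ is a norm it is $1$-subadditive, so $\psi = 1$. The noise-invariance hypothesis is immediate: a spherical distribution is by definition invariant under every orthogonal transformation, and because $N_m=_d -N_m$ the bounds for $f_m(N_m)$ and $f_m(-N_m)$ will coincide. What remains is to identify sequences $t_m$ and $\tilde t_m$ satisfying conditions 2(a) and 2(b) of Theorem \ref{cons}, and then check that \eqref{svdr} delivers the signal-strength inequality $\|s_m\|_\infty > \tilde t_m + 2 t_m$.

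The key technical input will be a sharp high-probability bound on $\|U\|_\infty$ when $U$ is uniform on the unit sphere $S^{p_m-1}$. I would obtain this through the Gaussian representation $U = Z/\|Z\|_2$ with $Z\sim\N(0,I_{p_m})$: the standard sub-Gaussian maximal inequality yields $\max_i |Z_i| \le (1+o(1))\sqrt{2\log p_m}$ with high probability, while $\|Z\|_2 \ge (1-o(1))\sqrt{p_m}$ by standard $\chi^2$ concentration. Combining these gives, for any fixed $\epsilon>0$,
\[
P\Bigl(\|U\|_\infty \le (1+\epsilon)\sqrt{2\log p_m/p_m}\Bigr)\to 1.
\]

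With this bound in hand, I would use the standard decomposition of a spherical vector, $N_m = R_m U_m$ with $R_m = \|N_m\|_2$ and $U_m$ uniform on $S^{p_m-1}$ independently of $R_m$. Since by assumption $R_m \le t_{m,2}$ with probability tending to one, the concentration bound above lets me take $t_m = (1+\epsilon) t_{m,2}\sqrt{2\log p_m/p_m}$. For the randomized signal, applying a Haar-uniform orthogonal matrix to any fixed vector yields a uniform vector on the sphere of the same radius, so $G_m s_m =_d \|s_m\|_2 \cdot U'_m$ with $U'_m$ uniform on $S^{p_m-1}$; the same concentration bound gives $\tilde t_m = (1+\epsilon)\|s_m\|_2\sqrt{2\log p_m/p_m}$.

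Plugging these choices into the signal-strength requirement of Theorem \ref{cons} yields
\[
\|s_m\|_\infty > (1+\epsilon)\,(\|s_m\|_2 + 2 t_{m,2})\sqrt{2\log p_m/p_m},
\]
which is precisely \eqref{svdr} up to the factor $(1+\epsilon)$; the strict liminf in \eqref{svdr} guarantees this holds for all sufficiently small $\epsilon>0$ and all large $m$. The only genuinely technical step is the concentration bound for $\|U\|_\infty$, which however is classical; the rest is a direct bookkeeping application of Theorem \ref{cons}, together with the spherical decomposition and the fact that Haar-uniform rotations send fixed vectors to uniformly distributed vectors on spheres.
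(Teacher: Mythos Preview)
Your proposal is correct and follows essentially the same route as the paper: apply Theorem \ref{cons} with $\psi=1$, represent a uniform vector on $S^{p_m-1}$ as $Z/\|Z\|_2$ with $Z\sim\N(0,I_{p_m})$, use the standard bounds $\|Z\|_\infty\le(1+o_P(1))\sqrt{2\log p_m}$ and $\|Z\|_2=(1+o_P(1))\sqrt{p_m}$, and then read off $t_m$ and $\tilde t_m$ from the spherical decompositions of $N_m$ and $G_m s_m$. The paper phrases the noise step as the distributional identity $\|N_m\|_\infty=_d(\|Z_m\|_\infty/\|Z_m\|_2)\|N_m\|_2$ rather than invoking the radius--direction decomposition explicitly, but the content is identical.
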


See Section \ref{pfe1s} for the proof.

\begin{figure}[htb!]
\centering
\includegraphics[width=0.49\textwidth]{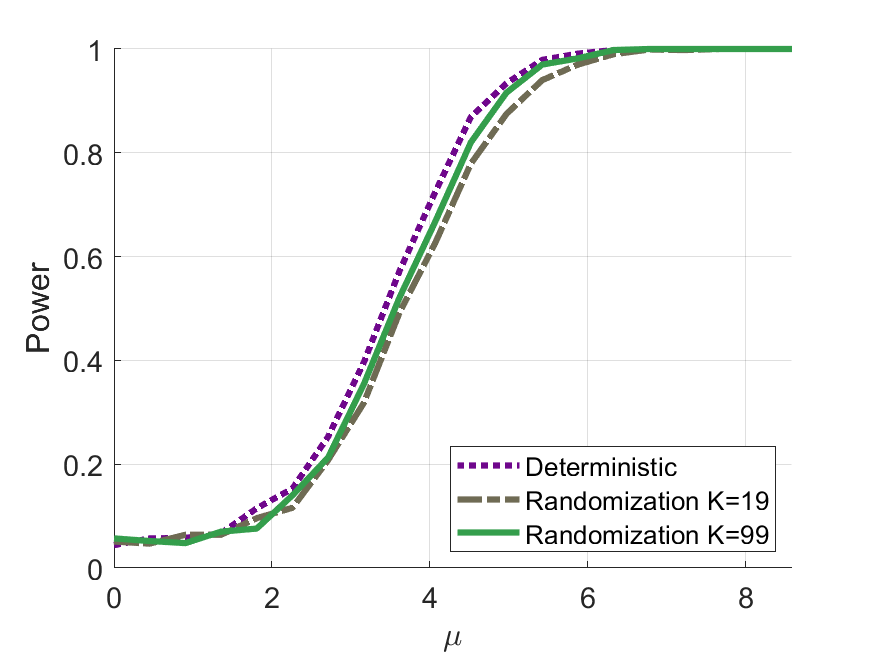}
\includegraphics[width=0.49\textwidth]{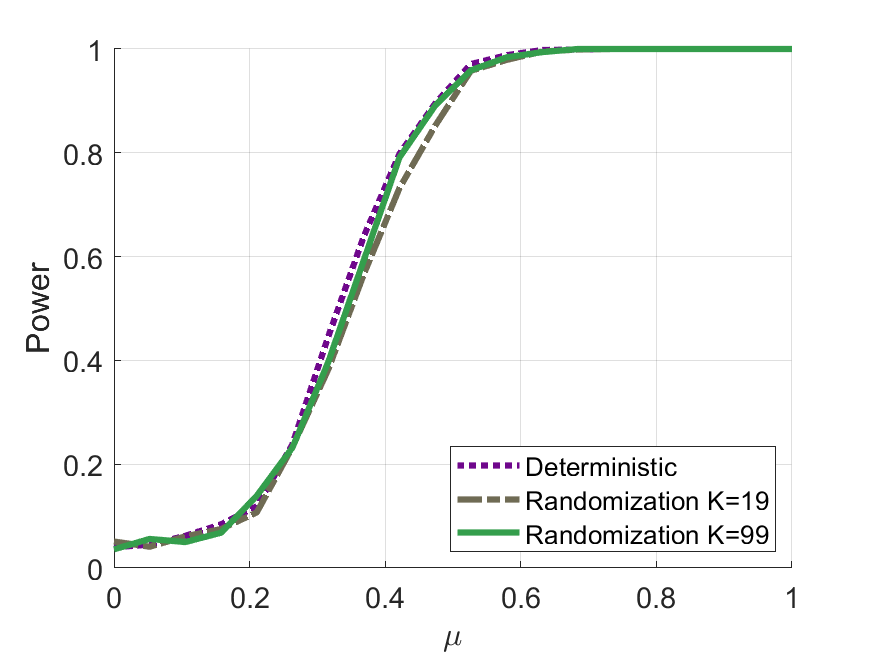}
\caption{
Evaluating the power of the randomization test in comparison with the deterministic test as a function of signal strength in sparse vector detection. Left plot: rotation test; right plot: signflip test. See the text for details.}
\label{f_sp_or}
\end{figure}

This condition is a form of \emph{relative sparsity}: the maximal absolute coordinate $\|s_m\|_\infty$ is large compared to the $\ell_2$ norm  $\|s_m\|_2$ and to the noise level $\|N_m\|_2$. 
{Proposition \ref{consdet} leads to the condition $\liminf_{m\to\infty}\|s_m\|_\infty/\|N_m\|_\infty \ge 2$. Now, one can check (and we do in the proof) that $\|N_m\|_\infty  =_d \|N_m\|_2 \cdot \|Z_m\|_\infty/\|Z_m\|_2$, where $Z_m \sim \N(0,I_{p_m})$. Moreover, as we also check in the proof, $ \|Z_m\|_\infty \sim (2\log p_m)^{1/2}$ and $ \|Z_m\|_2 \sim p_m^{1/2}$. Hence, the condition for the deterministic test is (roughly)
$$\liminf_{m\to\infty}\frac{\|s_m\|_\infty/(2\log p_m)^{1/2}}{2t_{m,2}/p_m^{1/2} }>1.$$
We can see that the condition is milder that \eqref{svdr} (compare the denominators); but may be asymptotically equivalent if $\|s_m\|_2 = o(t_{m,2})$.
}

\begin{table}[]
\renewcommand\arraystretch{1.2}
\centering
\begin{tabular}{|l|l|l|}
\hline
Distribution                           & Density                            & Distribution of $\|Z\|^2$ \\ \hline
Normal                                 & $\sim \exp(-\|z\|_2^2/2)$        & $\chi_p^2$                       \\ \hline
Multivar. Cauchy                    & $\sim (1+\|z\|_2^2)^{-(p+1)/2}$ & $p\cdot F_{p,1}$                       \\ \hline
Multivar. $t$ with $d$ d.o.f.       & $\sim (1+\|z\|_2^2/d)^{-(p+d)/2}$ & $p\cdot F_{p,d}$                       \\ \hline
\end{tabular}
\caption{Classical examples of spherical distributions, for random vectors $Z\in \R^p$, for $p>0$. The densities are given up to constants independent of the argument $z\in \R^p$, and the distribution of $\|Z\|_2^2$ is given in terms of classical distributions such as the chi-squared distribution with $p$ degrees of freedom ($\chi_p^2$), and the $F$-distribution with $p$ and $d>0$ degrees of freedom ($F_{p,d}$).}
\label{sph:t}
\end{table}

{{\bf Obtaining consistency results.} Therefore, {obtaining specific consistency results} boils down to controlling $\|N_{m}\|_2$, the $\ell_2$ norm of a spherically invariant random vector. 
This distribution can be completely arbitrary.
We give a few examples of such random vectors in Table \ref{sph:t}, including normal, multivariate $t$, and multivariate Cauchy distributions. See \cite{fang2018symmetric}, Chapter 3, for more examples.}
{
   \benum
   \item For $Z_m\sim \N(0,I_{p_m})$, we have $\|Z_m\|_2^2 \sim \chi^2_{p_m}$. 
   By the chi-squared tail bound in Lemma~8.1 of \citet{Birge2001}, when $\Gamma_m \sim \chi^2_{p_m}$
   $$\mathbb{P}\biggl(\Gamma_m/p_m \geq 1 + 2\sqrt{\frac{x}{p_m}} + \frac{2x}{p_m}\biggr) \le e^{-x}.$$
   Hence, for any sequence $(l_m)_{m\ge1}$ such that $l_m>0$ for all $m$ and $l_m \to \infty$ as $m\to \infty$,  $\Gamma_m ^{1/2} \le p_m^{1/2} \left(l_m^{1/2} \wedge [1+ O((l_m/p_m)^{1/2})]\right)$ with probability tending to unity. 
   Thus we can take $t_{m,2} = p_m^{1/2} \left(l_m^{1/2} \wedge [1+ O((l_m/p_m)^{1/2})]\right)$. 
   \item For a multivariate Cauchy distribution (and more generally a multivariate $t$ distribution with $d_m\ge 1$ degrees of freedom), by the chi-squared tail bound in Lemma~8.1 of \citet{Birge2001}, when $\Gamma_m \sim \chi^2_{d_m}$, $\Gamma_m/d_m \ge 1 - 2\sqrt{x/d_m}$ with probability at most $\exp(-x)$. Hence, for any sequence $(l_m)_{m\ge1}$ such that $l_m>0$ for all $m$ and $l_m \to \infty$ as $m\to \infty$,  $1/\Gamma_m ^{1/2} \le d_m^{1/2} \left(l_m^{1/2} \wedge [1+ O((l_m/d_m)^{1/4})]\right)$ with probability tending to unity. 
   Thus we can take 
   $$t_{m,2} = \frac{p_m^{1/2} \left(l_m^{1/2} \wedge [1+ O((l_m/p_m)^{1/2})]\right)}{d_m^{1/2} \left(l_m^{1/2} \wedge [1+ O((l_m/d_m)^{1/4})]\right)}.$$
   \eenum
   }

{{\bf Discussion of rate-optimality.}
In this case, obtaining explicit lower bounds on detection thresholds is much more difficult.
We are not aware of any results in this direction under the full level of generality of our model, and thus we discuss the difficulties here.
Suppose that the noise distribution has density $p_m$ with respect to the Lebesgue measure; since the distribution is rotationally invariant, we have $p_m(N_m) = \pi_m(\left\|N_m\right\|_2)$ for some density $\pi_m$ on $[0,\infty)$. The chi-squared method shows that to achieve consistency, one must have 
$$\lim_{m\to\infty} \int_{x_m \in \R^{p_m}} \frac{ \pi_m(\left\|x_m-s_m\right\|_2)^2}{ \pi_m(\left\|x_m\right\|_2)} dx_m =\infty.$$
For instance, if the noise is distributed as a multivariate $t$ distribution with $d_m$ degrees of freedom, with density $c_{m}(1+\|z\|_2^2)^{-(p_m+d_m)/2}$, where $c_m = \Gamma[(p_m+d_m)/2]/[\Gamma(d_m/2) (\pi d_m)^{p_m/2}]$, then we must show that, with $e_m = (p_m+d_m)/2$,
$$\lim_{m\to\infty} 
c_{m}\int_{x_m \in \R^{p_m}} 
\left(\frac
{1+\|x_m\|_2^2/d_m} 
{(1+\|x_m-s_m\|_2^2/d_m)^{2}}
\right)^{e_m}
dx_m =\infty.$$
By changing variables to $x_m-s_m$, using the rotational invariance of the density, denoting $\nu_m = \|s_m\|$, we can express the integral as an expectation with respect to $X_m$ distributed as a multivariate $t$ distribution with $d_m$ degrees of freedom as
$$ 
\E
\left(1 +\frac
{\nu_m(2X_{m,p_m}+\nu_m)} 
{d_m+\|X_m\|_2^2}
\right)^{e_m}.$$
However, there does not appear to be a simple way to evaluate, or obtain sharp bounds on, this expectation, showing the difficulty of obtaining lower bounds for this problem.}

{\bf Numerical example.} We support our theoretical result by a numerical example. We generate data from the signal-plus-noise	model $X_m= s_m+N_m$, where $N_m\sim \N(0,I_{p_m})$, with $p_m=100$ and $s_m = (\mu,0,0,\ldots,0)^\top$ with the signal strength parameter $\mu$ taking values over a grid of size $20$ spaced equally between 0 and $4\cdot \sqrt{\log p_m}$. 
We evaluate the power of the deterministic test based on $\|X_m\|_\infty$, tuned to have level equal to $\alpha=0.05$. 
The critical value $t_\alpha$ is set so that $P_{H_{m0}}(\|X_m\|_\infty\ge t_\alpha)=0.05$, and thus equals $t_\alpha = \Phi^{-1}([(1-\alpha)^{1/p_m}+1]/2)$, where $\Phi^{-1}$ is the standard normal quantile function, i.e., the inverse of the standard normal cumulative distribution function. {In this case, the noise has both rotational and sign symmetry.}
We also evaluate the power of the randomization test based on $K=19$ {and $K=99$} random orthogonal rotations {as well as the same number of random signflips}, with $\alpha=0.05$. We repeat the experiment {1000} times and plot the average frequency of rejections.

On Figure \ref{f_sp_or}, we observe that, as expected, the randomization tests correctly controls the level (under the null when $\mu=0$). Moreover, the power of all tests increases to unity over the range of signals considered, and the deterministic test has only slightly higher power than the randomization tests. In particular, the randomization tests achieve power almost equal to unity at almost the same point as the deterministic test. This is aligned with our results, and supports our claims that the randomization tests are near-optimal. {Further, we also observe that the power with $K=99$ random transforms is slightly higher.}

\begin{figure}[htb!]
\centering
\includegraphics[width=0.49\textwidth]{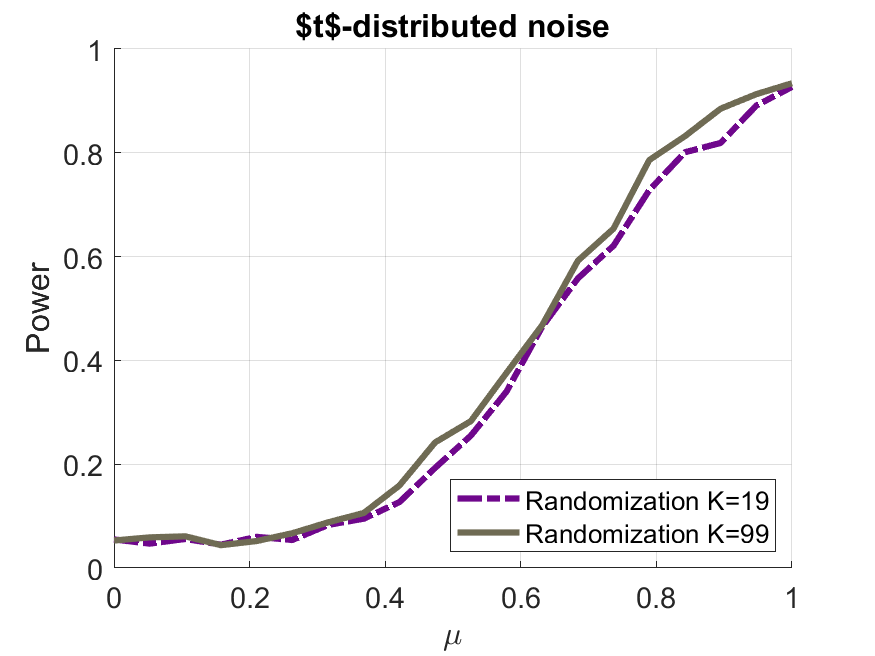}
\includegraphics[width=0.49\textwidth]
{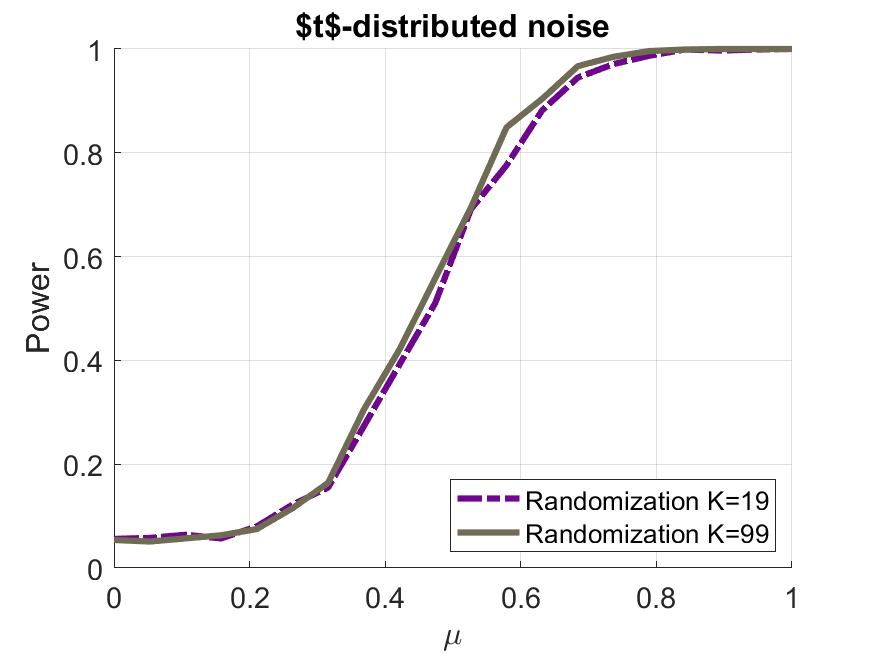}
\caption{
Evaluating the power of the randomization test for $t$-distributed noise. Left plot: $t$ distribution with three degrees of freedom; right plot:  $t$ distribution with five degrees of freedom. See the text for details.}
\label{f_t}
\end{figure}

{{\bf Heavy tailed example.} One of the the strengths of randomization tests is that they seamlessly apply to heavy tailed noise. To illustrate this, we repeat the above experiment with $t$-distributed noise entries (with three and five degrees of freedom, respectively) instead of normal noise, and using the signflip randomization test. 
On Figure \ref{f_t}, we observe that the power of the randomization test increases over the range studied; but since the $t$ distribution has heavier tails than the normal, the power increases at a slower rate than in our previous experiment, especially for the $t$ distribution with three degrees of freedom.}




\subsection{Detecting spikes/low-rank matrices}
A second example is the important problem of detecting low-rank matrices, which is fundamental in multivariate statistical analysis, including in PCA and factor analysis, see e.g., \cite{anderson1958introduction,muirhead2009aspects,johnstone2001distribution,dobriban2017permutation,johnstone2015testing,johnstone2018pca,hong2020selecting}.

Here the data $X_m$ is represented as an $n_m \times p_m$ matrix, where often $n_m$ is the number of samples/datapoints, and $p_m$ is the number of features. We are interested to detect if there is a latent signal in the highly noisy observation matrix; and we model this by a matrix with a large operator norm. 
Formally, $X_m=s_m+N_m$, where $s_m,N_m$ are $n_m\times p_m$ matrices, and we use the  operator norm test statistic $f_m(X_m) = \|X_m\|_\op = \sigma_{\max}(X_m)$. {This is just one of the many possibilities. One could consider other $\psi$-subadditive test statistics; and in particular norms, such as the maximum absolute entry, $\max_{i,j}|X_{m,ij}|$, or generalized Ky Fan norms of the form $X \mapsto (\sum_{i=1}^\kappa \sigma_i(X_m)^\zeta)^{1/\zeta}$, where $\sigma_1(X_m) \ge \ldots \sigma_{n_m \wedge p_m}(X_m)\ge 0$ are the singular values of $X_m$, $\kappa \ge 1$, and $\zeta \ge 1$ \citep{li1988some}.} 

{As in the previous sections, there are many possible models for the structure of the noise and its corresponding group of invariances. For illustration, we only study one of them here. We consider a model where the columns of $N_m$ are independent, and each has a spherical distribution.}
As in the general theory, we consider a sequence of such signal-plus-noise matrices, for a sequence of signals $s_m$. We can then randomize via independent uniform rotations of the columns.
Recall that $\|s_m\|_{2,\infty}$ is the maximum of the column $\ell_2$ norms of $s_m$.


\begin{proposition}\label{e2}
Let the observations follow the matrix signal-plus-noise model $X_m = s_m+N_m$, where $X_m,s_m,N_m$ are $n_m \times p_m$-dimensional matrices and 
{each column of $N_m$ is independent, with a spherical distribution}.
As $n_m,p_m\to \infty$ such that $c_0\le n_m/p_m\le c_1$ for arbitrary fixed $0<c_0<c_1$, the sequence of randomization tests \eqref{t0} with test statistics $\|X_m\|_\op$ and  randomization distributions  uniform over the direct product of orthogonal groups $\mathcal{G}_m = O(n_m) \otimes O(n_m) \ldots \otimes O(n_m)$ rotating the columns of the data is consistent  {against the sequence of alternatives with $s_{m} \in \Theta_{m1}$, if there is a sequence $(t_{m,2})_{m\ge 1}$ such that with probability tending to unity, $\|N_m\|_{2,\infty}\le t_{m,2}$, and for any sequence $(s_m)_{m\ge 1}$ such that for all $m\ge 1$, $s_m \in \Theta_{m1}$,}
{$$\liminf_{m\to\infty}\frac{\|s_m\|_\op/(n_m^{1/2}+p_m^{1/2})}
{(\|s_m\|_{2,\infty}+2t_{m,2})/n_m^{1/2}}>2.$$}
\end{proposition}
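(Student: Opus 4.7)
The plan is to apply Theorem \ref{cons} with the test statistic $f_m(X) = \|X\|_\op$. Since the operator norm is a norm on the space of $n_m \times p_m$ matrices, it is $1$-subadditive, so I can take $\psi = 1$. For condition 1 (noise invariance), I would note that applying an independent, uniformly distributed orthogonal rotation to each column of $N_m$ preserves the joint distribution, since the columns are independent and each is spherical.

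The key technical step is to bound the operator norm of any matrix $M \in \R^{n_m \times p_m}$ whose columns are independent and uniformly distributed on spheres of prescribed radii $d_1,\ldots,d_{p_m}$. Factoring $M = W \diag(d_1,\ldots,d_{p_m})$ with $W$ having independent uniform unit-sphere columns, I get $\|M\|_\op \le (\max_j d_j)\, \|W\|_\op$. To control $\|W\|_\op$, I would write $W =_d Z D_Z^{-1}$, where $Z$ is an $n_m \times p_m$ matrix with iid $\N(0,1)$ entries and $D_Z$ is the diagonal matrix of its column norms. Standard concentration for Gaussian matrices (Davidson--Szarek, or Gordon's inequality) yields $\|Z\|_\op \le (1+o(1))(n_m^{1/2}+p_m^{1/2})$ with high probability, while a chi-square tail bound combined with a union bound over the $p_m = O(n_m)$ columns gives $\min_j \|Z_{\cdot,j}\|_2 \ge (1-o(1))\, n_m^{1/2}$ (here I crucially use the bounded aspect ratio $c_0 \le n_m/p_m \le c_1$). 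Hence $\|W\|_\op \le 2(n_m^{1/2}+p_m^{1/2})/n_m^{1/2}$ with probability tending to unity for all $m$ large enough.

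Applying this bound to $N_m$, conditional on its column norms (which by assumption are all bounded by $\|N_m\|_{2,\infty} \le t_{m,2}$ with probability tending to unity), verifies condition 2(a) with $t_m = 2 t_{m,2}(n_m^{1/2}+p_m^{1/2})/n_m^{1/2}$. Applied to the randomized signal $G_m s_m$, whose columns are independent uniform draws from spheres of radii $\|s_{m,j}\|_2 \le \|s_m\|_{2,\infty}$, the same argument yields condition 2(b) with $\tilde t_m(s_m) = 2 \|s_m\|_{2,\infty}(n_m^{1/2}+p_m^{1/2})/n_m^{1/2}$. Substituting into the signal-strength condition of Theorem \ref{cons}, which for $\psi = 1$ reads $\|s_m\|_\op > \tilde t_m + 2 t_m$, gives
$$\|s_m\|_\op > \frac{2(n_m^{1/2}+p_m^{1/2})}{n_m^{1/2}}\bigl(\|s_m\|_{2,\infty} + 2 t_{m,2}\bigr),$$
which is exactly the asymptotic strict-inequality condition stated in the proposition.

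The main obstacle will be pinning down the operator-norm bound on $W$ with constants small enough to yield the claimed factor of $2$ in front of the liminf, uniformly across the aspect-ratio regime $c_0 \le n_m/p_m \le c_1$. Note that some slack is unavoidable in our approach because the bound $\|M\|_\op \le (\max_j d_j)\|W\|_\op$ replaces the possibly inhomogeneous column norms by their maximum, which would be sharp only if the column norms were nearly equal; controlling this inhomogeneity more finely would be needed to sharpen the constant further.
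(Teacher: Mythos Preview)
Your proposal is correct and follows essentially the same argument as the paper: represent each rotated column as a normalized Gaussian, pull out the maximum column-norm scaling via $\|[d_1 m_1;\ldots;d_{p_m} m_{p_m}]\|_\op \le (\max_j |d_j|)\|M\|_\op$, control $\|Z\|_\op$ by Davidson--Szarek, and control $\min_j \|Z_{\cdot,j}\|_2$ by chi-square concentration with a union bound over the $p_m = O(n_m)$ columns. The paper's proof in fact records the sharper constant $(1+o_P(1))$ in place of your $2$, yielding a liminf threshold of $1$ rather than the $2$ in the stated proposition; your cruder constant loses this factor but matches the proposition as written.
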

See Section \ref{pfe2} for the proof.
{One can verify that Proposition \ref{consdet} implies that the deterministic test based on $\|\hbeta_m\|_\infty$ is consistent when
$$\liminf_{m\to\infty}\frac{\|s_m\|_\op/(n_m^{1/2}+p_m^{1/2})}
{2t_{m,2}/n_m^{1/2}}>2.$$}

{When $N_m\sim\N(0,I_{n_m}\otimes I_{p_m})$, one can verify that we can take $t_{m,2} = n_m^{1/2} (1 + o_P(1))$, thus the condition in Proposition \ref{e2} can be verified to simplify to $\liminf_{m\to\infty}[\|s_m\|_\op/(n_m^{1/2}+p_m^{1/2})-
\|s_m\|_{2,\infty}/n_m^{1/2}]>1.$}

{More generally, suppose that $N_m = [\nu_{m,1} O_{m,1}; \ldots, \nu_{m,p_m} O_{m,p_m}]$, where $\nu_{i,m_i}$, $i\in [p_m]$ are iid from a distribution with cdf $F_m$, and $O_{i,m_i}$, $i\in [p_m]$ are iid according to the Haar measure on the orthogonal group $O(p_m)$. Then the condition on $t_{m,2}$ is that $P(\max_{i=1}^{p_m} \nu_{i,m_i} \le t_{m,2}) = F_m(t_{m,2})^{p_m} \to 1$. Consider any sequence $(l_m)_{m\ge1}$ such that $l_m>0$ for all $m$ and $l_m \to 0$ as $m\to \infty$. Then, we can take $t_{m,2} =  F_m^{-1}(1- l_m/p_m)$. }

{{\bf Rate-optimality.} Suppose that $N_m\sim\N(0,I_{n_m}\otimes I_{p_m})$, and let $\Theta_{m1} = \{\sqrt{n_m/2} \cdot\tau\cdot uv^\top, v \in \R^{n_m}, u\in \R^{p_m}, \|u\|=\|v\|=1 \}$. 
Suppose without loss of generality that $n_m \le p_m$; otherwise flip the roles of $n_m$ and $p_m$.
Consider a prior $\Pi_m$ on $\Theta_{m1}$ such that $u = [v; 0_{p_m-n_m}]$, and $v$ follows a distribution $\Pi_m'$. Based on \eqref{vlr}, we have
\begin{align*}
\V_{P_{m0}}[L_m]  &= \E_{S,S'\sim \Pi_m} \exp(S^\top S')
= \E_{uv^\top,u'(v')^\top\sim \Pi_m} \exp(n_m\tau^2/2\cdot u^\top u' v^\top v')\\
&= \E_{v,v'\sim \Pi_m'} \exp(n_m\tau^2/2\cdot (v^\top v')^2)
.
\end{align*}
This has the exact same form as the expression studied in Theorem 1 of \cite{banks2018information}. From that result, it follows that, if $\Pi_m'$ is uniform over $\{\pm1\}^{n_m}/\sqrt{n_m}$ and $\tau<1$, then  $\V_{P_{m0}}[L_m] \le C$ for a constant $C<\infty$ not depending on $n_m$. This shows a lower bound of order $\tau \gtrsim n_m^{1/2}$. Meanwhile, our upper bound simplifies to $\tau \lesssim n_m^{1/2}$, showing that randomization tests are rate-optimal in this case.\\
To summarize:
\begin{proposition}[Rate-optimality of rotation test for low-rank matrix detection]\label{e2r}
Under the assumptions of Proposition \ref{e2}, suppose that 
$N_m\sim\N(0,I_{n_m}\otimes I_{p_m})$, and let $\Theta_{m1}(\tau_m) = \{s_m=\sqrt{\min(n_m,p_m)/2} \cdot\tau_m\cdot uv^\top, v \in \R^{n_m}, u\in \R^{p_m}, \|u\|=\|v\|=1 \}$.
The sequence of rotation tests 
\eqref{t0}
of the sequence of null hypotheses
$s_m=0$ from Proposition \ref{e2} 
is 
consistent 
against the sequence of alternatives with $s_{m} \in \Theta_{m1}(\tau_m)$
when $\tau_m = C \sqrt{\min(n_m,p_m)}$ for a sufficiently large constant $C>0$.
Moreover, when $\tau_m = o(\sqrt{\min(n_m,p_m)})$, there is no consistent sequence of tests of $s_m=0$ against $s_m\in\Theta_{m1}(\tau_m)$.
\end{proposition}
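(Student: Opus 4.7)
My plan is to prove the upper and lower bounds separately, invoking Proposition~\ref{e2} for the former and the chi-squared (Ingster) method of Section~\ref{ro} for the latter.

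For the upper bound, I set $\tau_m = C\sqrt{\min(n_m,p_m)}$ and apply Proposition~\ref{e2}. First, to control $t_{m,2}$ for Gaussian noise $N_m \sim \N(0, I_{n_m}\otimes I_{p_m})$: each column has $\ell_2$ norm distributed as $\sqrt{\chi^2_{n_m}}$, so by the chi-squared tail bound from Section~\ref{tail} combined with a union bound over the $p_m$ columns (the $\log p_m$ factor being absorbed by $n_m \asymp p_m$), we may take $t_{m,2} = n_m^{1/2}(1+o(1))$. Next, for any signal $s_m = \sqrt{\min(n_m,p_m)/2}\tau_m\cdot uv^\top \in \Theta_{m1}(\tau_m)$, I have $\|s_m\|_\op = \sqrt{\min(n_m,p_m)/2}\tau_m$ and bound $\|s_m\|_{2,\infty}$ in terms of the $\ell_\infty$ norm of the appropriate singular vector. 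Substituting into the ratio condition of Proposition~\ref{e2} and using $c_0 \le n_m/p_m \le c_1$, I verify the condition holds when $C$ is sufficiently large, yielding consistency.

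For the lower bound, I follow the chi-squared computation sketched in the paragraph preceding the proposition. Assume $n_m \le p_m$ WLOG (the reverse case follows by transposing the data and the roles of the two indices). Take the prior $\Pi_m$ on $\Theta_{m1}(\tau_m)$ with $u = [v; 0_{p_m-n_m}] \in \R^{p_m}$ and $v$ uniform over $\{\pm 1/\sqrt{n_m}\}^{n_m}$. Extending \eqref{vlr} to the matrix setting via the Frobenius inner product $\langle S, S'\rangle = \tr(S^\top S')$, I compute
\begin{equation*}
\V_{P_{m0}}[L_m] = \E_{v,v'} \exp\!\Bigl(\tfrac{n_m\tau_m^2}{2}(v^\top v')^2\Bigr).
\end{equation*}
Applying Theorem~1 of \cite{banks2018information} to this exact form, the expectation remains bounded by a finite constant whenever $\tau_m = o(\sqrt{\min(n_m,p_m)})$. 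Hence, by Ingster's criterion (Section~\ref{ro}), since $\V_{P_{m0}}[L_m] \not\to \infty$, no consistent sequence of tests of $H_{m0}$ against the sequence of alternatives $s_m \in \Theta_{m1}(\tau_m)$ exists.

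The main obstacle is on the upper-bound side: verifying the signal-strength ratio in Proposition~\ref{e2} uniformly over $s_m \in \Theta_{m1}(\tau_m)$. In the worst case, where a singular vector has a dominant entry, $\|s_m\|_{2,\infty}$ becomes comparable to $\|s_m\|_\op$, so the arithmetic must be done carefully using $n_m \asymp p_m$ and the precise asymptotics of $t_{m,2}$; one also has to ensure the argument is symmetric enough to cover both orientations $n_m \le p_m$ and $p_m < n_m$ via the transposition reduction, since the rotation test treats columns asymmetrically. The lower bound, by contrast, reduces cleanly to a direct invocation of the cited Banks et al.\ variance formula, with the only bookkeeping being the identification of the matrix Frobenius inner product with the bilinear form appearing there.
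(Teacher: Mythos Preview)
Your lower-bound argument mirrors the paper's exactly and is fine: the prior with $v$ uniform on $\{\pm 1/\sqrt{n_m}\}^{n_m}$, the identification of the Frobenius inner product, and the reduction to the Banks et al.\ variance bound are precisely what the paper invokes.

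The upper bound, however, has a genuine gap that your ``careful arithmetic'' cannot close. Take any $s_m\in\Theta_{m1}(\tau_m)$ whose column-indexing singular vector is a standard basis vector $e_1\in\R^{p_m}$, so that $s_m$ has a single nonzero column. Then $\|s_m\|_{2,\infty}=\|s_m\|_{\op}$, and the ratio in Proposition~\ref{e2} satisfies
\[
\frac{\|s_m\|_{\op}\, n_m^{1/2}}{(\|s_m\|_{\op}+2t_{m,2})(n_m^{1/2}+p_m^{1/2})}
\;<\; \frac{n_m^{1/2}}{n_m^{1/2}+p_m^{1/2}}
\;\le\; \frac{1}{1+c_1^{-1/2}}\;<\;1,
\]
no matter how large $C$ (and hence $\|s_m\|_{\op}$) is taken. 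So the sufficient condition of Proposition~\ref{e2} is never met for this sequence of alternatives, and no amount of bookkeeping with $n_m\asymp p_m$ or with the asymptotics of $t_{m,2}$ rescues it. Your proposed ``transposition reduction'' does not help either: the rotation test in Proposition~\ref{e2} rotates columns, so transposing the data yields a \emph{different} test, not the one whose consistency you must prove. In fact this is not merely slack in the sufficient condition: for such $s_m$ the column-wise rotation preserves every column norm of $X_m$, so $\|G_mX_m\|_{\op}$ and $\|X_m\|_{\op}$ are both governed by the single dominant column and are of the same size. The upper-bound claim as stated therefore cannot be obtained from Proposition~\ref{e2} uniformly over $\Theta_{m1}(\tau_m)$; one needs either an additional delocalization constraint on the singular vectors in the alternative, or a genuinely different argument for the sparse-column case.
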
}


\subsection{Sparse detection in linear regression}

We consider the fundamental linear regression problem $Y_m = X_m\beta_m+\ep_m$, where $\ep_m$ is random. The null hypothesis is that $\beta_m=0$, and we are interested to detect ``sparse" alternatives in the same way as in Section \ref{se1}, i.e., vectors $\beta_m$ with a large $\ell_\infty$ norm.

We can directly view this as a signal plus noise model, where $s_m = X_m\beta_m$.
 However, the most direct approach of using a test statistic such as $f_m(Y_m) = \|Y_m\|_\infty$ leads to a condition for consistency that depends on the $\ell_\infty$ norm $X_m\beta_m$ as opposed to $\beta_m$ only. 
Instead, we write the ordinary least squares (OLS) estimator $\hbeta_m$ as 
\begin{align*}
\hbeta_m = X_m^\dagger Y_m = P_{X_m}\beta_m+X_m^\dagger\ep_m,
\end{align*}
where $X_m^\dagger$ is the pseudo-inverse of $X_m$, and $P_{X_m}$ is the projection into the row space of $X_m$. 
Formally, this is the OLS estimator if $n_m \ge p_m$ and $X_m$ has full rank; otherwise it is the minimum $\ell_2$ norm interpolator of the normal equations $X_m^\top (Y_m-X_m \hbeta_m) = 0$.
We can view this as a signal-plus-noise model with observation $X'_m = \hbeta_m$, signal $s_m = P_{X_m}\beta_m$, and noise $N_m = X_m^\dagger\ep_m$.
If $n_m \ge p_m$ and $X_m$ has full rank, $s_m = \beta_m$, but in general this approach only provides information about the projection of $\beta_m$ into the row span of $X_m$.
We are interested to detect sparse signals using the test statistic $f_m(\hbeta_m)  = \|\hbeta_m\|_\infty$. 

{As before, there are many possibilities for the structure of the noise. As in Section \ref{sign-noise}, we consider coordinate-wise sign-symmetric noise, assuming that for any vector $b\in \{\pm1\}^{n_m}$, $(\ep_{m,1}, \ldots, \ep_{m,n_m}) =_d (b_1\ep_{m,1}, \ldots, b_{n_m}\ep_{m,n_m})$.
We consider the randomization test from equation \eqref{t0}, where we randomly flip the sign of the data $Y_m$ $K$ times using diagonal matrices $B_{m,i}$, $i=1,\ldots,K$, with iid Rademacher entries on the diagonal.
For any $n_m$-dimensional vector $v$, define  the matrix
\beq\label{xm}
\mathcal X_m(v) =  [X_m^\dagger\diag(v);-X_m^\dagger\diag(v)].
\eeq
For $j=1,\ldots,p_m$, let $[X_m^\dagger]_{j,\cdot}$ be the $j$-th row of $X_m^\dagger$. 
Let 
\beq\label{tx}
T(X_m) = \{\diag([X_m^\dagger]_{j,\cdot})X_m w : w\in \R^{p_m}, \|w\|_\infty\le 1,  j\in [p_m]  \}.
\eeq
Define the vector $|\ep_m| = (|\ep_{m,1}|,\ldots,|\ep_{m,n_m}|)^\top$. Recall $U^+$ from \eqref{u+}.
}
Below, $\|M\|_{\infty,\infty} = \sup_{\|v\|_\infty \le 1} \|Mv\|_\infty$ is the induced matrix norm, which is also the maximum of the $\ell_1$ norms of the rows of $M$.

\begin{proposition}\label{e3} Let the data $(X_m,Y_m)$ follow the linear regression model $Y_m = X_m\beta_m+\ep_m$, where $Y_m$ is an $n_m$-dimensional vector of outcomes, $X_m$ is and $n_m \times p_m$-dimensional observation matrix, and $\beta_m$ is an unknown $p_m$-dimensional vector of regression parameters. 
{Let $\ep_m$ have independent entries $\ep_{m,i}$, $i=1,\ldots,n_m$, such that $\ep_{m,i}=_d-\ep_{m,i}$.}
The sequence of randomization tests 
\eqref{t0}
of the null hypothesis $P_{X_m}\beta_m=0$ with test statistics $\|\hbeta_m\|_\infty$, where $\hbeta_m = X_m^\dagger Y_m$, and randomization distributions uniform over {$n_m\times n_m$ diagonal matrices with independent Bernoulli entries} is consistent
{against the sequence of alternatives with $P_{X_m}\beta_{m} \in \Theta_{m1}$, 
if there are two sequences $(l_m)_{m\ge1}$ and  $(t_{m})_{m\ge1}$ such that the following hold:
\benum
\item 
 $l_m>0$ for all $m$ and $l_m \to \infty$ as $m\to \infty$,
\item  $t_m>0$ for all $m$ and, with $U^+$ from \eqref{u+} and $\mathcal X_m$ from \eqref{xm}, $P(U^+(\mathcal X_m(|\ep_m|), l_m ) \le t_{m})\to 1$,
\item 
  for any sequence $(P_{X_m}\beta_m)_{m\ge 1}$ such that for all $m\ge 1$, $P_{X_m}\beta_m \in \Theta_{m1}$, with $T(X_m)$ from \eqref{tx},
$$\liminf_{m\to\infty}\left(\|P_{X_m}\beta_m\|_\infty \frac{1-U^+(T(X_m), l_m)}{2t_m}\right)> 1.$$
\eenum}
\end{proposition}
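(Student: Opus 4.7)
The plan is to apply Theorem \ref{cons} directly, viewing the observed data as $Y_m$ (not $\hbeta_m$) with signal $X_m\beta_m$ and noise $\ep_m$, with the group of sign-flipping diagonal matrices $B_m$ acting linearly on $Y_m$. The test statistic to use in the framework is $f_m(Y) = \|X_m^\dagger Y\|_\infty$, which is $1$-subadditive because it is the composition of the linear map $X_m^\dagger$ with a norm, so we set $\psi=1$. Three things need to be verified: noise invariance of $\ep_m$ under sign flips (immediate from $\ep_{m,i} =_d -\ep_{m,i}$ and mutual independence), the key identity $f_m(X_m\beta_m) = \|X_m^\dagger X_m\beta_m\|_\infty = \|P_{X_m}\beta_m\|_\infty$, and the two high-probability bounds on the noise $f_m(\pm\ep_m)$ and on the randomized signal $f_m(B_m X_m\beta_m)$.

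For the noise level, I would exploit sign-symmetry by writing $\ep_m \stackrel{d}{=} \diag(|\ep_m|)\,b$ for an independent vector $b$ of iid Rademacher entries. Conditional on $|\ep_m|$,
\[
\|X_m^\dagger \ep_m\|_\infty \;=\; \max_{j\in[p_m]} \bigl|[X_m^\dagger]_{j,\cdot}\diag(|\ep_m|)\,b\bigr| \;=\; \sup_{t\in\mathcal X_m(|\ep_m|)} t^\top b,
\]
after absorbing the absolute value into the $\pm$ rows of $\mathcal X_m$. This is a Bernoulli process indexed by $\mathcal X_m(|\ep_m|)$, so the concentration bound stated in \eqref{u+} yields $\sup_{t} t^\top b \le U^+(\mathcal X_m(|\ep_m|), l_m)$ with conditional probability tending to $1$. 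Combined with the assumed tail bound on $U^+(\mathcal X_m(|\ep_m|),l_m)$, we conclude that $f_m(\ep_m) \le t_m$ with unconditional probability tending to $1$; the same bound handles $f_m(-\ep_m)$ since the norm is even.

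For the randomized signal, I would use $X_m\beta_m = X_m P_{X_m}\beta_m$ and factor out the scale $\|P_{X_m}\beta_m\|_\infty$. Writing $w = P_{X_m}\beta_m/\|P_{X_m}\beta_m\|_\infty$, we have $\|w\|_\infty = 1$ and
\[
\|X_m^\dagger B_m X_m \beta_m\|_\infty \;\le\; \|P_{X_m}\beta_m\|_\infty \;\sup_{\|w\|_\infty \le 1,\;j\in[p_m]} \bigl|\langle b,\; \diag([X_m^\dagger]_{j,\cdot}) X_m w\rangle\bigr| \;=\; \|P_{X_m}\beta_m\|_\infty \sup_{t\in T(X_m)} t^\top b,
\]
the last equality again using that $T(X_m)$ is symmetric under $w\mapsto -w$. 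Applying the Bernoulli concentration bound a second time, $\sup_{t\in T(X_m)} t^\top b \le U^+(T(X_m), l_m)$ with probability tending to $1$, so we may take $\tilde t_m = \|P_{X_m}\beta_m\|_\infty \cdot U^+(T(X_m), l_m)$.

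Putting the pieces together, the signal-strength condition \eqref{cr} of Theorem \ref{cons} with $\psi = 1$ becomes $\|P_{X_m}\beta_m\|_\infty > \tilde t_m + 2 t_m$, which rearranges to the stated inequality after dividing by $2t_m$. The main obstacles are bookkeeping: correctly identifying the linearly acting group on $Y_m$ (rather than on the derived estimator $\hbeta_m$, which is \emph{not} directly acted on by $B_m$), verifying that the index sets $\mathcal X_m(|\ep_m|)$ and $T(X_m)$ genuinely put both the noise norm and the randomized-signal norm into Bernoulli-process form including the outer maximum over $j\in[p_m]$, and observing that implicitly the bound is vacuous unless $U^+(T(X_m),l_m) < 1$ for large $m$, which is built into the $\liminf > 1$ condition.
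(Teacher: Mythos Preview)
Your proposal is correct and follows essentially the same route as the paper's proof: apply Theorem~\ref{cons} to the $1$-subadditive statistic $f_m(Y)=\|X_m^\dagger Y\|_\infty$, condition on $|\ep_m|$ to recognize $\|X_m^\dagger\ep_m\|_\infty$ as a Bernoulli process over $\mathcal X_m(|\ep_m|)$, and bound the randomized signal $\|X_m^\dagger B_m X_m\beta_m\|_\infty$ by $\|P_{X_m}\beta_m\|_\infty$ times a Bernoulli process over $T(X_m)$, invoking the $U^+$ concentration bound in both places. The paper's only cosmetic difference is that it routes the randomized-signal bound through the induced matrix norm $\|X_m^\dagger B_m X_m\|_{\infty,\infty}$ before unpacking it into the same supremum over $T(X_m)$.
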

See Section \ref{pfe3} for the proof. 
{This result bounds the quantity $\|X_m^\dagger \ep_m\|_\infty$ by an ``asymmetrization'' argument first, by conditioning on $|\ep_m|$ and using the Bernoulli/Rademacher randomness over the signs of the entries of $\ep_m$. However, in specific cases when more is known about the distribution of $\ep_m$, one may obtain simpler results by directly bounding this quantity. For instance, when $\ep_m \sim \N(0, I_{p_m})$, $X_m^\dagger \ep_m \sim \N(0, X_m^\dagger (X_m^\dagger)^\top)$, and under certain structural conditions on $X_m$, one may be able to derive sharp bounds for the required maximum $\|X_m^\dagger \ep_m\|_\infty$ of a correlated multivariate Gaussian random vector.
}

{For comparison, one can verify that Proposition \ref{consdet} implies that the deterministic test based on $\|\hbeta_m\|_\infty$ is consistent when the (at least as liberal) condition $\liminf_{m\to\infty}\|P_{X_m}\beta_m\|_\infty/(2t_m)> 1$ holds. }\\

{{\bf Discussion of rate-optimality.} There is a large literature on optimal hypothesis testing for linear regression, see for instance \cite{ingster2010detection,arias2011global,mukherjee2020minimax,carpentier2021optimal} and references therein. These works essentially only study iid Gaussian (or sub-Gaussian) noise, and make varying assumptions on the design matrix and signal strength. In general it appears quite difficult to make a direct comparison to our assumptions. For instance the work of \cite{arias2011global} (their Theorem 2) implies that if $[X_m]_{j,\cdot}$ is the $j$-th row of $X_m$, and $(c_m)_{m\ge1}$ is a sequence  such that $c_m>0$ for all $m$ and $c_m \to 0$ as $m\to \infty$, then if $X_m^\top X_m$  is normalized to have unit diagonal entries,  if for all $i\in [p_m]$, $|\{j\in [p_m]: |[X_m]_{j,\cdot}^\top [X_m]_{i,\cdot}| \ge c_m (\log p_m)^{-4}\}| = O(p_m^\delta)$ for all $\delta >0$, and if the regression coefficient $\beta_m$ can be any 1-sparse vector, then  it is required that $\liminf_{m\to\infty}\|\beta_m\|_\infty/\sqrt{2\log p_m}\ge 1$ in order for any test to have non-vanishing detection power. The main assumption is that for any feature, the number of other features with correlation above the level $c_m (\log p_m)^{-4}$ is smaller than any positive power of $p_m$. This assumption does not appear to be easily comparable to our conditions. Indeed, our conditions require (among others) to bound $\|X_m^\dagger \ep_m\|_\infty$, where $X_m^\dagger \ep_m \sim \N(0, X_m^\dagger (X_m^\dagger)^\top)$, which does not appear to be directly related to the conditions from \cite{arias2011global}. \\
Thus, our conditions under which the randomization test works appear to be different from the ones that have been studied before for rate optimality in this problem. Since our main goal in this paper was to develop a general framework that enables proving consistency results for randomization tests, we view it as beyond our scope to fully elucidate the relationships between our conditions and those variously proposed in the literature. We would like to emphasize that our consistency results cover settings where the noise for every observation is assumed to be merely independent and symmetrically distributed, potentially heteroskedastic and heavy-tailed. This goes beyond the settings in which lower bounds have been proved for this problem.}

\subsection{Two-sample testing}

We study a two-sample testing problem, which is a classical and fundamental problem of exceeding importance in statistics,  see e.g., \citep{lehmann1998theory,lehmann2005testing}. We study this for illustration purposes only, as there are well-established tests. We do not claim that randomization tests are better, merely that they are applicable, and it is of interest to understand what they lead to.

We consider permutation based randomization tests, valid when the entries of the noise are exchangeable. {For a given integer $m \ge 1$ and dimension $p_m$, let $(f_\mu)_{\mu \in \R^{p_m}}$ be a location family of densities on $\R^{p_m}$. Let $\|\cdot\|_{\R^{p_m}}$ be a norm on $\R^{p_m}$. Let 
$\ep_{m,i} \sim f_{0_m}$ sampled from the location family at the all-zero vector be iid for $i\in [n_m]$, and $\ep_{m,i}' \sim f_{0_m}$ also be iid for $i\in [n_m']$. } 

\begin{proposition}\label{e5}
{Suppose $Z_{m,1},\ldots,Z_{m,n_m}\sim f_{\mu_m}$, $Y_{m,1},\ldots,Y_{m,n_m'}\sim f_{\mu_m'}$ are independent observations, and test the null hypothesis that $\mu_m=\mu_{m}'$ against the alternative that $\mu_{m}\neq \mu_m'$.} 
{Consider the randomization test \eqref{t0} with test statistic $\|\bar Z_{m}-\bar Y_{m'}\|_{\R^{p_m}}$, where $\bar Z_{m} = n_m^{-1}\sum_{i=1}^{n_m} Z_{m,i}$ and $\bar Y_{m}' = ({n_m'})^{-1}\sum_{i=1}^{n_m'} Y_{m,i}$.}

For a randomization distribution uniform over the symmetric group of all permutations $S_{n_m+n_m'}$, the sequence of randomization tests \eqref{t0} of the sequence of null hypotheses $\mu_m=\mu_{m}'$ is consistent 
{against the sequence of alternatives with $(\mu_m,\mu_{m}') \in \Theta_{m1}$, if
\benum
\item 
 as $m\to\infty$, $n_m+n_m'\to\infty$,
\item 
 there is a sequence $(t_{m})_{m\ge1}$ such that $P(\|({n_m'})^{-1}\sum_{i=1}^{n_m'} \ep_{m,i}' - n_m^{-1}\sum_{i=1}^{n_m} \ep_{m,i}\|_{\R^{p_m}}\le t_{m})\to 1$, 
where  $\ep_{m,i}, \ep_{m,j}'  \sim f_{0_m}$, $i\in [n_m]$, $j\in [n_m']$ are iid.
\item 
for any sequence $(\mu_m,\mu_{m}')_{m\ge 1}$ such that for all $m\ge 1$, $(\mu_m,\mu_{m}') \in \Theta_{m1}$,
$$\liminf_{m\to\infty}
\frac{\|\mu_{m}'-\mu_m\|_{\R^{p_m}}}{t_m}>2.$$
\eenum}
\end{proposition}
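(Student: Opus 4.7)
The plan is to view this as a direct application of Theorem \ref{cons} in the nuisance-parameter framework, using the block-permutation action on the pooled sample. Let $X_m = (Z_{m,1}^\top,\ldots,Z_{m,n_m}^\top,Y_{m,1}^\top,\ldots,Y_{m,n_m'}^\top)^\top \in (\R^{p_m})^{n_m+n_m'}$ and write $X_m = \nu_m + s_m + N_m$, where $\nu_m$ has all $n_m+n_m'$ blocks equal to $\mu_m$ (the nuisance), $s_m$ has zeros in its first $n_m$ blocks and $\delta_m := \mu_m'-\mu_m$ in its last $n_m'$ blocks, and $N_m$ stacks the iid noise vectors $\ep_{m,i}, \ep_{m,j}'\sim f_{0_m}$. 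The group $\mathcal{G}_m = S_{n_m+n_m'}$ acts by permuting blocks, and since all blocks of $\nu_m$ coincide, $\pi\nu_m = \nu_m$ for every $\pi$. Moreover $f_m(x) := \|n_m^{-1}\sum_{i\le n_m} x_i - (n_m')^{-1}\sum_{i>n_m} x_i\|_{\R^{p_m}}$ is invariant under adding a common vector to every block, so $f_m$ depends only on $s_m+N_m$. Being a norm, $f_m$ is $1$-subadditive; the noise $N_m$ is exchangeable across blocks under the null, so the noise invariance condition of Theorem \ref{cons} holds.

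A direct calculation gives $f_m(s_m) = \|\delta_m\|_{\R^{p_m}}$, and because $f_m$ is a norm, $f_m(-N_m) = f_m(N_m) = \|n_m^{-1}\sum_i \ep_{m,i} - (n_m')^{-1}\sum_j \ep_{m,j}'\|_{\R^{p_m}}$, so assumption (2) of the proposition directly supplies the noise-level sequence $t_m$ required by Theorem \ref{cons}. The main step is to control the randomized signal $f_m(\pi s_m)$ for $\pi$ uniform on $S_{n_m+n_m'}$. Let $K_\pi$ count the originally signal-carrying blocks (indices $n_m+1,\ldots,n_m+n_m'$) that land in the first $n_m$ positions after permutation. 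Then $K_\pi$ is Hypergeometric$(n_m+n_m',n_m',n_m)$, and a direct computation yields
\[
f_m(\pi s_m) = \left|\frac{K_\pi}{n_m} - \frac{n_m'-K_\pi}{n_m'}\right|\,\|\delta_m\|_{\R^{p_m}} = \left|\frac{(n_m+n_m')K_\pi}{n_m n_m'} - 1\right|\|\delta_m\|_{\R^{p_m}}.
\]
The standard hypergeometric moments give $\E[K_\pi] = n_m n_m'/(n_m+n_m')$, so the centered coefficient above has mean zero, and its variance simplifies to $1/(n_m+n_m'-1)$.

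Now I would pick a sequence $\epsilon_m \to 0$ with $\epsilon_m^2(n_m+n_m'-1)\to\infty$, for example $\epsilon_m = (n_m+n_m')^{-1/4}$, which is admissible by assumption (1). Chebyshev's inequality then yields $P(f_m(\pi s_m)\le \epsilon_m\|\delta_m\|_{\R^{p_m}})\to 1$. Taking $\tilde t_m(s_m) := \epsilon_m\|\delta_m\|_{\R^{p_m}}$, the signal-strength requirement \eqref{cr} with $\psi=1$ reads $\|\delta_m\|_{\R^{p_m}}(1-\epsilon_m) > 2 t_m$, which is implied by hypothesis (3), $\liminf_m \|\delta_m\|_{\R^{p_m}}/t_m > 2$, for all sufficiently large $m$. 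Consistency then follows from Theorem \ref{cons}.

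The only potential obstacle is the concentration of $K_\pi$, but Chebyshev suffices; sharper Hoeffding-type bounds for sampling without replacement would permit faster-decaying $\epsilon_m$ but are not needed for the stated conclusion. Exchangeability of the pooled sample under the null also gives the level bound in the standard way.
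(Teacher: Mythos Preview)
Your proposal is correct and follows essentially the same route as the paper: reduce to Theorem \ref{cons} with $\psi=1$, identify the noise-level condition with assumption (2), and control $f_m(\pi s_m)$ by showing the scalar coefficient in front of $\|\delta_m\|_{\R^{p_m}}$ has mean zero and variance $1/(n_m+n_m'-1)$, then apply Chebyshev. The only cosmetic difference is that the paper parametrizes this coefficient as $w^\top \Pi_m w \cdot n_m n_m'/(n_m+n_m')$ with $w=[-1_{n_m}/n_m;\,1_{n_m'}/n_m']$ and computes the second moment directly, while you parametrize via the hypergeometric count $K_\pi$ and cite its known moments---these are the same random variable.
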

See Section \ref{pfe5} for the proof. 
{As for the one-sample test for sparse detection, Proposition \ref{consdet} leads to the same condition; thus suggesting that the additional noise due to randomization is small.}
{The condition looks similar to the one we obtained for the one-sample test; however this concerns a different randomization distribution (permutations), and thus requires a different analysis. 
Bounding $t_m$ depends on the conditions we impose on the location family, on the growth of the dimension and sample sizes, and on the specific norm used. For instance, in certain cases one may use Orlicz-norm based concentration inequalities (see e.g., Section \ref{sign-noise} for examples), which can be adapted to the norm $\|\cdot\|_{\R^{p_m}}$. \\
Following the approach from Section \ref{sign-noise}, for $\|\cdot\|_{\R^{p_m}} = \|\cdot\|_\infty$, the same results stated there apply by assuming the same conditions on the noise vectors for both samples, and by bounding the noise vectors of the two samples separately. For instance, if the entries of $\ep_{m,i}$, $i\in [n_m]$, $\ep_{m,i}'$, $i\in [n_m']$ are iid sub-exponential, then we can take $t_m \sim \sqrt{(\log p_m)/\min(n_m, n_m')}$.}

{{\bf Rate-optimality.} It is straightforward to see that the lower bound technique from Section \ref{sign-noise} generalizes, and leads to a bound of the order $\tau_m=\sqrt{(\log p_m)/\min(n_m, n_m')}$. Indeed, when $n_m \le n_m'$, one can take $\mu_{m',j}=0$ and $\mu_{m,j} = \tau_m \cdot e_j$, for $j\in [n_m]$ in the construction of the alternatives in Ingster's method, and it is straightforward to see that the desired conclusion holds by the same calculation as in Section \ref{sign-noise}.  This shows that for noise with iid sub-exponential entries, the signflip based randomization test is rate-optimal. To summarize:
\begin{proposition}[Rate-optimality of permutation test for sparse two-sample testing]\label{e5r}
Under the assumptions of Proposition \ref{e5}, suppose that 
$\ep_{m,i} \sim f_{0_m}$ for $i\in [n_m]$, and $\ep_{m,i}' \sim f_{0_m}$ for $i\in [n_m']$, have iid entries with a sub-exponential  distribution $\pi$. 
Let $\Theta_{m1}(\tau_m) = \{(\mu_m,\mu_{m}') \in \R^{p_m}\times\R^{p_m}: \|\mu_m-\mu_{m}'\|_\infty \ge \tau_m\}$. 
The permutation test of the sequence of null hypotheses
$\mu_m=\mu_{m}'$ from Proposition \ref{e5} 
is 
consistent 
against the sequence of alternatives with $(\mu_m,\mu_{m}') \in \Theta_{m1}(\tau_m)$
when $\tau_m = C \sqrt{\log(p_m)/\min(n_m, n_m')}$ for a sufficiently large constant $C>0$.
Moreover, when $\tau_m = o(\sqrt{\log(p_m)/\min(n_m, n_m')})$, there is no consistent sequence of tests of $\mu_m=\mu_{m}'$ against $(\mu_m,\mu_{m}') \in \Theta_{m1}(\tau_m)$, $m\ge 1$.
\end{proposition}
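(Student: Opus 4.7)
The upper bound is a direct consequence of Proposition \ref{e5} combined with a sub-exponential concentration inequality and a union bound. First I would write $\bar\ep_m' - \bar\ep_m$ as a $p_m$-dimensional vector whose $k$-th coordinate is $(n_m')^{-1}\sum_{i=1}^{n_m'}\ep_{m,i,k}' - n_m^{-1}\sum_{i=1}^{n_m}\ep_{m,i,k}$, a centered sum of at most $n_m+n_m'$ independent sub-exponential random variables with variance of order $1/n_m + 1/n_m' \lesssim 1/\min(n_m,n_m')$. By Bernstein's inequality (Section \ref{tail}), each such coordinate exceeds $u$ in absolute value with probability at most $C\exp(-c\min(n_m,n_m')\min(u,u^2))$. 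A union bound over $k\in[p_m]$ shows that, for a sufficiently large constant $C'$, the choice $t_m = C'\sqrt{\log(p_m)/\min(n_m,n_m')}$ yields $P(\|\bar\ep_m'-\bar\ep_m\|_\infty\le t_m)\to 1$. Plugging this into condition 3 of Proposition \ref{e5} and taking $C > 2C'$ in the definition of $\tau_m$ gives the required consistency against every sequence in $\Theta_{m1}(\tau_m)$.

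The lower bound follows from the chi-squared (Ingster) method reviewed in Section \ref{ro}, and mirrors the argument sketched for Proposition \ref{e1r}. Without loss of generality assume $n_m\le n_m'$ (otherwise swap the two samples). Take the null $P_{m0}$ to have $\mu_m=\mu_m'=0$, set $M_m = p_m$, and for $j\in[p_m]$ let $P_{mj}\in H_{m1}$ correspond to $\mu_{m,j} = \tau_m e_j$ and $\mu_{m,j}'=0$. Because the $Y$-sample has the same distribution under the null and this alternative, its contribution to the likelihood ratio is identically $1$, so
$$L_m = \frac{1}{p_m}\sum_{j=1}^{p_m}\prod_{i=1}^{n_m}\frac{\pi(Z_{m,i,j}-\tau_m)}{\pi(Z_{m,i,j})}\cdot\prod_{k\neq j}\prod_{i=1}^{n_m}\frac{\pi(Z_{m,i,k})}{\pi(Z_{m,i,k})},$$
which is exactly the quantity computed in Section \ref{sign-noise} with $n_m$ in place of $n_m$ there. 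Hence
$$\V_{P_{m0}}[L_m] = \frac{1}{p_m}\left\{\bigl(1+\chi^2(\pi(\cdot-\tau_m),\pi)\bigr)^{n_m}-1\right\}.$$
Under the standard regularity conditions giving $\chi^2(\pi(\cdot-\tau_m),\pi) = I_\pi\tau_m^2 + o(\tau_m^2)$, the condition for divergence of $\V_{P_{m0}}[L_m]$ becomes $\liminf_{m\to\infty} n_m\tau_m^2/\log p_m \ge 1/I_\pi$, so any $\tau_m = o(\sqrt{\log(p_m)/n_m}) = o(\sqrt{\log(p_m)/\min(n_m,n_m')})$ keeps $\V_{P_{m0}}[L_m]$ bounded. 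Ingster's criterion then precludes the existence of a consistent sequence of tests.

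The main obstacle is the regularity step required to justify the Fisher-information expansion of the $\chi^2$-divergence for a general symmetric sub-exponential $\pi$; one needs integrability of $(\pi'/\pi)^2$ against $\pi$ and control of the $o(\tau_m^2)$ remainder uniformly as $\tau_m\to 0$. This can be handled by standard smoothness arguments or, if desired, by restricting attention to a sub-family of noise distributions (e.g., Gaussian or logistic) for which the expansion is immediate; the same obstacle was already implicit in the corresponding step for Proposition \ref{e1r}. Once the expansion is in hand, combining the two displays above with the upper bound completes the proof of Proposition \ref{e5r}.
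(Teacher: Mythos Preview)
Your proposal is correct and follows essentially the same approach as the paper: the upper bound combines Proposition \ref{e5} with Bernstein's inequality and a union bound to get $t_m\sim\sqrt{\log(p_m)/\min(n_m,n_m')}$, and the lower bound applies Ingster's method with $\mu_{m'}=0$ and $\mu_{m,j}=\tau_m e_j$, reducing to the one-sample computation from Section \ref{sign-noise}. The only cosmetic difference is that the paper bounds the two sample means separately while you bound their difference directly; and you are more explicit than the paper about the regularity caveat for the $\chi^2$--Fisher-information expansion, which the paper also leaves implicit.
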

}

\begin{figure}[htb!]
\centering
\includegraphics[width=0.5\textwidth]{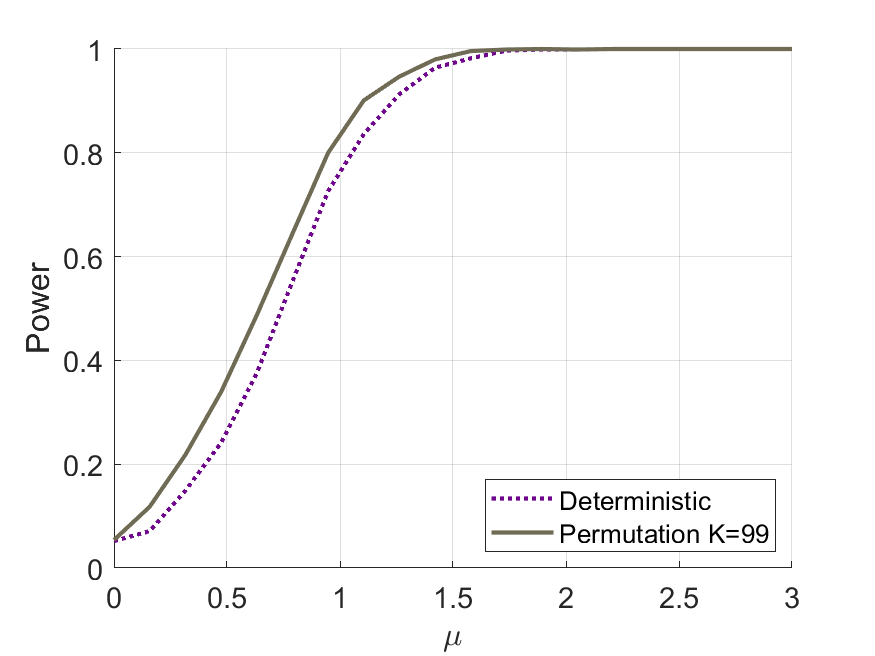}
\caption{
Evaluating the power of a permutation test in comparison with the $t$-test as a function of signal strength in two-sample testing. See the text for details.}
\label{f_t_perm}
\end{figure}

{{\bf Numerical example.} We support our theoretical result by a numerical example, using the two-sample $t$-test.\footnote{We thank a referee for suggesting this experiment.} We generate data from the Gaussian signal-plus-noise	model $Z_{m,i} \sim \N(s_m,1)$, for $i\in [n_m]$, and $Y_{m,i} \sim \N(0,1)$, for $i\in [n_m']$, where $s_m = \mu$, with the signal strength parameter $\mu$ taking values over a grid of size $20$ spaced equally between 0 and 3. 
We take $n_m = n_m'=15$.
We evaluate the power of the deterministic test based on the two-sample $t$-test, tuned to have level equal to $\alpha=0.05$. 
We also evaluate the power of the randomization test based on $K=99$ random permutations. We repeat the experiment 1000 times and plot the average frequency of rejections.}

{On Figure \ref{f_t_perm}, we observe similar phenomena to those mentioned before: the randomization test correctly controls the level, and the power of both tests increases to unity over the range of signals considered. The power of the two tests is very close.\footnote{We note that similar observations have been made by \cite{lehmann2012parametric}.}  In this experiment, the permutation test even has a slightly higher power.}




\section{Discussion}
\label{disc}
{
	We developed a set of results on the consistency of randomization tests. While we think that our results are quite powerful, they also have a number of limitations to be addressed in future work:
	\benum
	\item A limitation is the restriction to signal plus noise models. This is needed in the current proof technique; in fact our entire approach is based on this structure. However, to broaden the scope of our results, it would be important to extend to more general statistical models.
	\item Another limitation is that the level $\alpha$ is considered fixed. This is also needed in the proof, and is specifically used in the bound \eqref{fixk}. In some applications, especially in multiple hypothesis testing, the level $\alpha$ needs to shrink with the problem size. It would be important to extend our theory to this setting.
	\eenum
}

\section*{Acknowledgments}

We thank Edward I. George, Jesse Hemerik, Panos Toulis, and Larry Wasserman for valuable discussions. This work was supported in part by NSF BIGDATA grant IIS 1837992 and NSF CAREER award DMS 2046874.

\section{Appendix}
\label{pf}

\subsection{Practical Considerations}
\label{prac}

When are invariance based tests applicable in practice? When can one invoke the group invariance hypothesis?
We think that this is a challenging applied statistics problem, and we provide some discussion here.\footnote{We thank a reviewer for raising this question.}
When a data analyst is performing a hypothesis test, and they have reason to think that under the null hypothesis the distribution of the data is (nearly) unchanged under some operation, then one can invoke a group invariance condition.
Suppose for instance that the data analyst thinks that under the null hypothesis, the data is equally likely to have come in any order --- then one can invoke permutation invariance.
However, suppose that the data comes in predefined clusters (such as strata, or classes based on some key distinguishing class), and under the null hypothesis it is only reasonable to think that that data is equally likely to appear in any order in some specific clusters. Then one can use permutation invariance only over the permutations within those clusters.

This type of reasoning is more readily justifiable when testing a point null. In that case, since we only consider one distribution, assumptions can be justified with greater ease. However if we consider composite null hypotheses, such as those in two-sample testing, then it becomes much more challenging to justify invariance assumptions.

However one difficulty is that formally testing (evaluating) invariance assumptions can be very difficult, especially if the invariance groups are large (for instance suppose that we  only have one observation; then it is impossible to test that its density is symmetric around zero).
In our view these type of decisions can be quite application-specific.
Further there are a number of books and reviews on group invariance and permutation tests in statistics, and the interested statistical data analyst can study them for additional insights  \citep[see e.g.,][etc.]{pesarin2001multivariate,ernst2004permutation,pesarin2010permutation,pesarin2012review,good2006permutation,kennedy1995randomization,eaton1989group,wijsman1990invariant,giri1996group}.
\color{black}

\subsection{Proof for the general theory}

\subsubsection{Proof of $\psi$-sub-additivity in Section \ref{gencons}}
\label{pf-psi}

{
Let $x,y\in \R$ and suppose first that $0\le x < y$. Then, by concavity, $c(y) = c(x[x/y]+[x+y][1-x/y]) \ge c(x)(x/y)+c(x+y)(1-x/y)$, or equivalently, $c(x+y) \le [y c(y) - xc(x)]/[y-x]$. Thus, $c(x+y)\le c(x)+c(y)$ follows if $xc(y)\le yc(x)$. By concavity again, and also using that $c(0)\ge 0$, we have $c(x) = c(y[x/y]+0[1-x/y]) \ge c(y)(x/y)+c(0)(1-x/y) \ge c(y)(x/y)$, as required. Next, if $0\le x = y$, then the above argument used for $y = 2x$ shows that $c(x) \ge c(2x)/2$, thus $c(x+y) = c(2x)\le 2c(x) = c(x)+ c(y)$. This finishes the argument when $x,y\ge 0$. The same argument applies when $x,y\le 0$.\\
The remaining case is when $x,y$ have opposite signs. We can assume without loss of generality that $x<0<y$ and that $|y| \ge |x|$ (otherwise we can consider $(-x,-y)$). Then $f(x+y) = c(|x+y|) = c(x+y)  = c(y-|x|) \le c(y) + c(|x|)$, where the last inequality follows because $c$ is non-decreasing, and also as $0\le c(0)\le c(|x|)$.}

\subsubsection{Proof of Theorem \ref{cons}}
\label{pfcons}

{\bf Control of type I error.}
The first claim, about the level/Type I error control, is discussed at various levels of generality in many works. The textbook result, e.g., Problem 15.3 in \cite{lehmann2005testing} considers finite groups, and for infinite groups (e.g., Problem 15.1 in the same reference), assumes that we average over the full group. See also more general statements in theorem 2 in \cite{hemerik2018false} and theorem 2 in \cite{hemerik2018exact}.   We provide a simple argument {to show a key required exchangeability claim,} which extends the above results allowing for compact topological groups at a full level of generality, and applies to random sampling of a finite number of group elements. This is crucial for our results, because we use continuous groups such as orthogonal groups in many of our examples.

Let $T_0 = f_m(N_m)$, and $T_i = f_m(G_{mi}N_m)$ for $i=1,\ldots,K$. Note that due to noise invariance, $T_i$, $i=0,\ldots,K$ are exchangeable when $N_m,G_{m1},\ldots, G_{mK}$ are all considered random: the random variables in the vector $L=(N_m,G_{m1}N_m,\ldots,G_{mK}N_m)$ are exchangeable.

\begin{lemma} The random vectors $\{N_m,G_{m1}N_m,\ldots$, $G_{mK}N_m\}$ are mutually exchangeable.
\end{lemma}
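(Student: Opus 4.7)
The plan is to prove this by an auxiliary-variable (coupling) argument that extends the standard discrete-group proof to arbitrary compact topological groups equipped with Haar measure.

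First, I would introduce iid Haar random elements $H_{m0}, H_{m1}, \ldots, H_{mK}$ drawn from $Q_m$, independent of $N_m$, and consider the vector
\[
L' := (H_{m0}N_m, H_{m1}N_m, \ldots, H_{mK}N_m).
\]
By construction, $L'$ is manifestly exchangeable: the $H_{mi}$ are iid, $N_m$ is a single random object independent of them, and so permuting the indices $0,1,\ldots,K$ leaves the joint law of $L'$ unchanged.

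The core step is then to prove the distributional equality
\[
L' \;=_d\; L := (N_m, G_{m1}N_m, \ldots, G_{mK}N_m),
\]
from which exchangeability of $L$ is immediate. To do this I would re-index by setting $N'_m := H_{m0}N_m$ and $G'_{mi} := H_{mi}H_{m0}^{-1}$ for $i \ge 1$, so that
\[
L' = (N'_m,\, G'_{m1}N'_m,\, \ldots,\, G'_{mK}N'_m).
\]
I need three facts. First, $N'_m =_d N_m$ by the noise invariance hypothesis together with the independence of $H_{m0}$ and $N_m$. Second, conditional on $H_{m0} = h$, each $G'_{mi} = H_{mi}h^{-1}$ is Haar-distributed by the right-invariance property of $Q_m$ (i.e.\ $Q_m(A) = Q_m(A h^{-1})$), and the $G'_{mi}$ remain mutually independent given $H_{m0}$; since this conditional law does not depend on $h$, the vector $(G'_{m1}, \ldots, G'_{mK})$ is iid Haar and independent of $H_{m0}$. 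Third, since $N_m$ is independent of $(H_{m0}, H_{m1}, \ldots, H_{mK})$, the triple $\bigl(N'_m, (G'_{m1}, \ldots, G'_{mK})\bigr)$ matches the triple $\bigl(N_m, (G_{m1}, \ldots, G_{mK})\bigr)$ in joint distribution, with $N'_m$ independent of the $G'_{mi}$'s. Plugging this back in gives $L' =_d L$, completing the proof.

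The main technical point to handle carefully is the independence claim in the second fact above: verifying that $(G'_{m1},\ldots,G'_{mK})$ is independent of $H_{m0}$ relies on right-invariance of Haar measure on the compact Hausdorff topological group $\mathcal{G}_m$ (Theorems~2.10 and 2.20 in \cite{folland2016course}), which is exactly the structure already invoked in the setup. This is also the step that extends the textbook finite-group argument (e.g.\ Problem~15.3 in \cite{lehmann2005testing}) to the general compact setting, which is essential for the rotation-test examples in the paper.
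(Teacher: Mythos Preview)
Your proposal is correct and essentially identical to the paper's argument: the paper also introduces one extra independent Haar element $G_m$ (your $H_{m0}$), forms the manifestly exchangeable vector $B=(G_mN_m,G_{m1}N_m,\ldots,G_{mK}N_m)$ (your $L'$), and then proves $L=_dB$ by writing $N_m'=G_mN_m$, $G_{mi}'=G_{mi}G_m^{-1}$ and invoking invariance of the Haar measure after conditioning on $G_m$. The only cosmetic difference is that you phrase the key step via right-invariance while the paper phrases it via left-invariance; on a compact group the Haar measure is bi-invariant, so both formulations work.
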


\begin{proof}
 To see this, we will show that $L=(N_m,G_{m1}N_m,\ldots,G_{mK}N_m)$ has the same distribution as $B=(G_mN_m,G_{m1}N_m,\ldots,G_{mK}N_m)$, where $G_m\sim Q_m$ is independent of $N_m,G_{m1},\ldots$, $G_{mK}$. Denote $G_mN_m = N_m'$. Then this is equivalent to the statement that $A_m$ has the same distribution as $(N_m',G_{m1}G_m^{-1}N_m',\ldots,G_{mK}G_m^{-1}N_m')$. 

Let $G_{mi}' = G_{mi}G_m^{-1}$, for $i=1,\ldots,K$. Since $N_m= _d N_m'$, the above claim follows from because the vectors $(G_{m1},\ldots,G_{mK})$ and $(G_{m1}',\ldots,G_{mK}')$ have an identical distribution. 
For simplicity, we show this for $K=2$. The proof for the more general case is very similar. 

We can write for $i\neq j$, $Q_m(G_{mi}' \in M_i, G_{mj}' \in M_j)=Q_m(G_{mi} G_m^{-1} \in M_i, G_{mj}G_m^{-1} \in M_j) = Q_m(G_{mi} \in G_mM_i, G_{mj} \in G_mM_j)$. Now, let us condition on $G_m$. Then, we can write using the independence of $G_{mi},G_{mj}$ that  $Q_m(G_{mi} \in G_mM_i, G_{mj} \in G_mM_j|G_m) = Q_m(G_{mi} \in G_mM_i|G_m) Q_m(G_{mj} \in G_mM_j|G_m)$. Recall that $G_{mi} \sim Q_m$ are iid from the Haar/uniform probability measure on $\mathcal{G}_m$. Using the left-invariance of the Haar measure, we have $ Q_m(G_{mi} \in G_mM_i|G_m) = Q_m(G_{mi} \in M_i|G_m) = Q_m(G_{mi} \in M_i)$, and similarly for $j$. Hence, we find, using again the independence of $G_{mi}, G_{mj}$ that
$$Q_m(G_{mi}' \in M_i, G_{mj}' \in M_j) =  Q_m(G_{mi} \in M_i) Q_m(G_{mj} \in M_j) 
= Q_m(G_{mi} \in M_i, G_{mj} \in M_j).$$
This shows that the joint distribution of $(G_{m1},\ldots,G_{mK})$ and $(G_{m1}',\ldots,G_{mK}')$ is the same for $K=2$. The same argument works for $K>2$. This finishes the proof.
\end{proof}
{One can then finish the proof of type I error control as in the proof of theorem 2 in \cite{hemerik2018exact}.}

\vspace{5mm} 
{\bf Consistency.}
Now we move to the part about consistency.
We will consider a slight variant of the invariance-based randomization test, where for a fixed $K\ge 1$ we reject the null when 
\beq\label{t}
f_m(X_m)>\max\left(f_m(G_{m1}X_m),\ldots, f_m(G_{mK}X_m)\right),
\eeq
and where each $G_{mi}$, $i=1,\ldots,K$ is chosen uniformly at random over $\mathcal{G}_m$. The type I error probability over the random $X_m$ and $G_{mi}$ of this test is at most $1/(K+1)$, see Theorem \ref{cons}. 
{The consistency of this test implies the consistency of the quantile-based test. Specifically, given any $\alpha\in(0,1)$, choose any positive integer $K$ such that $1/(K+1)\le \alpha$. Let $R_{m,K}$ denote the event \eqref{t} and let $R_{m,\alpha}$ denote the event \eqref{t0}. Then, $R_{m,K} \subset R_{m,\alpha}$, and hence $P_{H_{m1}}(R_{m,K}) \le P_{H_{m1}}(R_{m,\alpha})$. We will show that $P_{H_{m1}}(R_{m,K}) \to 1$. Thus, it will follow that $P_{H_{m1}}(R_{m,\alpha}) \to 1$. Therefore, it is enough to study the test \eqref{t}.}
A simplification is given by the following lemma.

\begin{lemma}\label{r1}
Suppose $K$ is fixed. Then we have $P(f_m(X_m) > \max_{i=1}^K f_m(G_{mi}X_m)) \to 1$ if and only if we have $P(f_m(X_m) >  f_m(G_mX_m)) \to 1$ for a single $G_m\sim Q_m$.
\end{lemma}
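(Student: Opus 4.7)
\textbf{Proof plan for Lemma \ref{r1}.} The statement is an equivalence, so the plan is to handle each direction separately. The core observation is that $K$ is a fixed positive integer (not growing with $m$), so a union bound over $K$ terms preserves convergence to $1$, and each $G_{mi}$ is marginally distributed as a single $G_m \sim Q_m$ independent of $X_m$.

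For the easy direction (from max to single), I would observe the inclusion of events
\[
\{f_m(X_m) > \max_{i=1}^K f_m(G_{mi}X_m)\} \subseteq \{f_m(X_m) > f_m(G_{m1}X_m)\},
\]
so the probability of the right-hand event is at least that of the left-hand event. Since $G_{m1} \sim Q_m$ has the same marginal distribution as the single random group element $G_m$ in the statement, the convergence of the left-hand probability to $1$ forces the convergence of $P(f_m(X_m) > f_m(G_m X_m)) \to 1$.

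For the other direction (from single to max), I would take complements and apply a union bound. Specifically,
\[
\{f_m(X_m) \not> \max_{i=1}^K f_m(G_{mi}X_m)\} = \bigcup_{i=1}^K \{f_m(X_m) \le f_m(G_{mi}X_m)\},
\]
so
\[
P\!\left(f_m(X_m) \not> \max_{i=1}^K f_m(G_{mi}X_m)\right) \le \sum_{i=1}^K P(f_m(X_m) \le f_m(G_{mi}X_m)) = K\cdot P(f_m(X_m) \le f_m(G_m X_m)),
\]
where the last equality uses that each $G_{mi}$ is marginally $Q_m$-distributed and independent of $X_m$ (matching the distribution of the single $G_m$). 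By assumption, $P(f_m(X_m) \le f_m(G_m X_m)) \to 0$, and since $K$ is fixed, the bound tends to $0$, yielding $P(f_m(X_m) > \max_{i=1}^K f_m(G_{mi}X_m)) \to 1$.

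There is no serious obstacle here; the lemma is essentially a soft fact, and the only subtle point is noting that it genuinely uses $K$ being fixed (a growing $K$ would require sharper control on the tail probability). This is exactly why the broader proof strategy for Theorem \ref{cons} reduces, via the preceding paragraph, first to the max-based test \eqref{t} with fixed $K$ satisfying $1/(K+1)\le \alpha$, and then via this lemma to the single-transform comparison $f_m(X_m) > f_m(G_m X_m)$, which is the form that subsequent reductions (signal-noise decoupling) will attack.
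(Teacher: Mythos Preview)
Your proof is correct and matches the paper's approach: the paper defines $A_i=\{f_m(X_m)\le f_m(G_{mi}X_m)\}$, notes $P(A_i)=P(A_j)$, and sandwiches $P(A_1)\le P(\cup_i A_i)\le K\cdot P(A_1)$ to get both directions at once, which is exactly your inclusion-plus-union-bound argument written in a slightly more compressed form.
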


\begin{proof}[Proof of Lemma \ref{r1}]
Consider the events $A_i=\{f_m(X_m) \le f_m(G_{mi}X_m)\}$. By taking complements, it is enough to show that $P(\cup_{i=1}^K A_i) \to 0$ if and only if $P(A_1)\to 0$.

Since $G_{mi}$ have the same distribution for all $i\in[k]$, we have  $P(A_i) = P(A_j)$ for all $i,j$. Moreover, since $A_1\subset \cup_{i=1}^K A_i$, we have by the union bound that
\beq\label{fixk}
P(A_1) \le P(\cup_{i=1}^K A_i) \le \sum_{i=1}^K P(A_i)  = K \cdot P(A_1).
\eeq
Hence, as $K$ is bounded, we have $P(\cup_{i=1}^K A_i) \to 0$ iff $ P(A_1)\to 0$.
\end{proof}

Thus, for consistency to hold, it is enough to show that with probability tending to unity,
\begin{align*}
f_m(X_m) > f_m(G_mX_m).
\end{align*}
Now, $f_m(G_mX_m)=f_m(G_ms_m+G_mN_m)$.  We have the following:

\begin{lemma}[Independence Lemma]\label{il}
If $g_mN_m= _d N_m$ for any fixed $g_m\in \mathcal{G}_m$, then $G_m\indep G_mN_m$ when $G_m\sim Q_m$.
\end{lemma}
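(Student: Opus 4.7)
The argument will be a short calculation using Fubini together with the left-invariance of the Haar measure (equivalently, the assumed distributional invariance $g_m N_m =_d N_m$). The plan is to compute the joint law of $(G_m, G_m N_m)$ on measurable rectangles $A \times B$ with $A \subset \mathcal{G}_m$ and $B \subset V_m$ Borel, and show it factorizes as a product of marginals.

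First I would use that $G_m$ is drawn from $Q_m$ independently of $N_m$ (this independence is built into the randomization scheme). Conditioning on $G_m = g_m$, we get
\[
P(G_m \in A,\ G_m N_m \in B) \;=\; \int_A P(g_m N_m \in B)\, dQ_m(g_m).
\]
By the hypothesis that $g_m N_m =_d N_m$ for every fixed $g_m \in \mathcal{G}_m$, the inner probability $P(g_m N_m \in B)$ does not depend on $g_m$ and equals $P(N_m \in B)$. Therefore
\[
P(G_m \in A,\ G_m N_m \in B) \;=\; Q_m(A)\cdot P(N_m \in B).
\]
Next I would obtain the marginal of $G_m N_m$ by the same reasoning with $A = \mathcal{G}_m$, which gives $P(G_m N_m \in B) = P(N_m \in B)$. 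Substituting back yields
\[
P(G_m \in A,\ G_m N_m \in B) \;=\; P(G_m \in A)\cdot P(G_m N_m \in B),
\]
which is independence on a $\pi$-system generating the product $\sigma$-algebra, hence independence of $G_m$ and $G_m N_m$.

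I do not expect serious obstacles. The only points to verify carefully are (i) measurability of the map $(g_m, x) \mapsto g_m x$, which holds because $\rho_m$ is a continuous representation of a compact Hausdorff topological group acting linearly on $V_m$, so the joint map is Borel and Fubini applies to the iterated probability; and (ii) that the prior independence $G_m \indep N_m$ is indeed part of the setup, so that conditioning on $G_m$ leaves the law of $N_m$ unchanged. Note that left-invariance of $Q_m$ is not actually used in this particular lemma; the distributional invariance of $N_m$ itself does all the work.
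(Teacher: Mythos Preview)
Your proof is correct and follows essentially the same route as the paper's argument. The paper shows directly that the conditional distribution $P(G_mN_m\in A\mid G_m=g_{m0})=P(N_m\in A)$ is free of $g_{m0}$, whereas you express the same computation via Fubini on rectangles and conclude using a $\pi$-system argument; the core step---that $P(g_mN_m\in B)$ does not depend on $g_m$ by the invariance hypothesis---is identical.
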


\begin{proof}[Proof of Lemma \ref{il}]
We can write, for a measurable set $A$
\begin{align*}
P(G_mN_m\in A|G_m=g_{m0})&=P(g_{m0}N_m\in A|G_m=g_{m0})
=P(g_{m0}N_m\in A)
=P(N_m\in A).
\end{align*}
Since this expression does not depend on $g_{m0}$, the distribution of $G_mN_m$ does not depend on the value of $G_m$; thus $G_mN_m$  is independent of $G_m$.
\end{proof}

This implies that  for $G_m,N_m$  sampled  independently, $G_ms_m+G_mN_m$ has the same distribution as $G_ms_m +N_m$. Therefore,   $f_m(G_mX_m)=_df_m(G_ms_m+N_m)$, and it is enough to give conditions for
the potentially stronger condition that there is a deterministic sequence of critical values $t_m'$ such that 
\begin{align}\label{thresh}
P_{H_{m1}}(f_m(G_ms_m+N_m)\le t_m') + P_{H_{m1}}(f_m(X_m)> t_m')\to 2.
\end{align}

{By $\psi$-subadditivity, we can write
\begin{align}\label{psi-s}
f_m(X_m) = f_m(s_m+N_m) &\ge \psi f_m(s_m) -f_m(-N_m).
\end{align}
Since $t_m$  is such that $P(f_m(-N_m) \le t_m)\to 1$, we conclude that $P(f_m(X_m) \ge \psi f_m(s_m) - t_m)\to 1$. Hence, if
$f_m(s_m)> \psi^{-1}[t_m'(s_m) + t_m],$
then the desired condition $P_{H_{m1}}(f_m(X_m)> t_m')\to 1$ holds, provided that $P_{H_{m1}}(f_m(G_ms_m+N_m)\le t_m')\to 1$. 
By $\psi$-subadditivity again, we can write
\begin{align*}
f_m(G_ms_m+N_m) &\le \psi^{-1}[f_m(G_ms_m) + f_m(N_m)]
\le \psi^{-1}[\tilde t_m + t_m]
.
\end{align*}
Taking $t_m' = \psi^{-1}[\tilde t_m + t_m]$
finishes the proof.}

\subsubsection{Proof of Proposition \ref{consdet}}
\label{pfcons2}
{
As in the proof of Theorem \ref{cons},
it is enough to give conditions for the analogue of \eqref{thresh}, i.e., 
that there is a deterministic sequence of critical values $t_m'$ such that 
\begin{align*}
P_{H_{m0}}(f_m(N_m)\le t_m') + P_{H_{m1}}(f_m(X_m)> t_m')\to 2.
\end{align*}
By condition 2(a) of Theorem \ref{cons}, we can take $t_m'=t_m$, and $P_{H_{m0}}(f_m(N_m)\le t_m') \to 1$. By $\psi$-subadditivity, we have \eqref{psi-s}. Thus, we only need that $\psi f_m(s_m) - t_m > t_m$, which is true by \eqref{cd}. This shows that we can take $\tilde c_m \le t_m$ and finishes the proof.
}

\subsection{Proofs for the examples}
\subsubsection{Proof of Proposition \ref{e1}}
\label{pfe1}

{Since $\|\cdot\|_\infty$ is a norm, it is 1-subadditive.
Thus, the condition from Theorem \ref{cons} reads $n_m^{-1}\|1_{n_m}^\top s_m\|_\infty> \tilde t_m(s_m) + 2t_m.$ 
Moreover, $n_m^{-1}\|1_{n_m}^\top s_m\|_\infty = \|s_m\|_\infty$. 
The requirement on $t_m, \tilde t_m$ is that with probability tending to unity, $\|n_m^{-1}\sum_{i=1}^{n_m} N_{m,i}\|_\infty\le t_m$, 
and for Rademacher random variables $b_{m,i}$, $i\in[n_m]$, with probability tending to unity, $\|n_m^{-1}\sum_{i=1}^{n_m} b_{m,i} s_{m}\|_\infty = |n_m^{-1}\sum_{i=1}^{n_m} b_{m,i} |\cdot\|s_{m}\|_\infty\le \tilde t_m$. \\
By Hoeffding's inequality, for any $C>0$, $P(|n_m^{-1}\sum_{i=1}^{n_m} b_{m,i}| \ge C) \le 2\exp(-2n_m C^2)$. Hence, we can take $\tilde t_m = (a_m/[2n_m])^{1/2} \cdot \|s_m\|_\infty$, for any sequence $(a_m)_{m\ge 1}$ with $a_m\to\infty$. 
Thus, the condition is that for all $m$ large enough,
$$\|s_m\|_\infty> (a_m/[2n_m])^{1/2} \cdot \|s_m\|_\infty + 2t_m.$$
This requires that $a_m/[2n_m]<1$, which we can ensure holds for all large enough $n_m$ by taking $a_m$ to grow sufficiently slowly. For such large $n_m$, the condition is
$$\|s_m\|_\infty> \frac{2t_m}{1-(a_m/[2n_m])^{1/2}}.$$
Clearly, this holds when $a_m$ grows sufficiently slowly, for instance when $a_m  = \log n_m$, if $\lim\inf_{m\to\infty}\frac{\|s_m\|_\infty}{2t_m}>1$.
}

\subsubsection{Proof of Proposition \ref{e1s}}
\label{pfe1s}

{Since $\|\cdot\|_\infty$ is a norm, it is 1-subadditive.
Thus, the condition from Theorem \ref{cons} reads $\|s_m\|_\infty> \tilde t_m(s_m) + 2t_m.$ 
The requirement on $t_m,\tilde t_m$ is that with probability tending to unity, $\|N_{m}\|_\infty\le t_m$, 
and for $O_m\sim O(p_m)$, with probability tending to unity, $\|O_m s_{m}\|_\infty \le \tilde t_m$. \\
Now, for a normal random vector $Z_m \sim \N(0,I_{p_m})$, we have $\|O_m s_{m}\|_\infty =_d \|Z_m\|_\infty/\|Z_m\|_2 \cdot \|s_m\|_2$.
 For $Z_m\sim \N(0,I_{p_m})$, using standard chi-squared concentration of measure \citep{boucheron2013concentration}, we have $\|Z_m\|_2 = p_m^{1/2}(1+o_P(1))$.
Moreover, $\|Z_m\|_\infty \le (1+o_P(1))\sqrt{2\log p_m}$ with probability tending to unity.
Hence, we can take $\tilde t_m = (1+o_P(1))(2[\log p_m]/p_m)^{1/2} \cdot \|s_m\|_2$. 
Similarly, $\|N_m\|_\infty = \|O_m N_{m}\|_\infty =_d \|Z_m\|_\infty/\|Z_m\|_2 \cdot \|N_m\|_2 = (1+o_P(1))(2[\log p_m]/p_m)^{1/2} \cdot \|N_m\|_2$.\\
Thus, the condition is that there is a sequence $t_{m,2}$ such that $P(\|N_m\|_2\le t_{m,2})\to 1$ and for all $m$ large enough,
$$\|s_m\|_\infty> (1+o_P(1))(2[\log p_m]/p_m)^{1/2} \cdot \left(\|s_m\|_2 + 2t_{m,2}\right).$$
This holds when
$$\liminf_{m\to\infty}\frac{\|s_m\|_\infty/(2\log p_m)^{1/2}}{\left(\|s_m\|_2 + 2t_{m,2}\right)/p_m^{1/2} }>1.$$
}

\subsubsection{Proof of Proposition \ref{e2}}
\label{pfe2}

Since the maximal singular value is a norm, it is 1-subadditive.
Thus, the condition from Theorem \ref{cons} reads $\|s_m\|_\op> \tilde t_m(s_m) + 2t_m.$ 
The requirement on $t_m,\tilde t_m$ is that with probability tending to unity, $\|N_{m}\|_\op\le t_m$, 
and for $O_{m,1},\ldots, O_{m,p_m}\sim O(n_m)$, with probability tending to unity, $\|[O_{m,1}s_{m,1};\ldots; O_{m,p_m}s_{m,p_m}]\|_\op \le \tilde t_m$.

Now, for iid normal random vectors $Z_{m,i} \sim \N(0,I_{n_m})$, $i\in[p_m]$, we have $O_{m,i} s_{m,i} =_d  Z_{m,i}/\|Z_{m,i}\|_2 \cdot \|s_{m,i}\|_2$.
Thus,
$$\|[O_{m,1}s_{m,1};\ldots; O_{m,p_m}s_{m,p_m}]\|_\op
=_d
\|[Z_{m,1}/\|Z_{m,1}\|_2 \cdot \|s_{m,1}\|_2;\ldots; Z_{m,p_m}/\|Z_{m,p_m}\|_2 \cdot \|s_{m,p_m}\|_2]\|_\op
.$$
Further, for any matrix $M = [m_1; m_2; \ldots; m_{p_m}]$ and scalars $d_i$, $i\in[p_m]$,
\begin{align*}
\|[d_1 m_1; d_2 m_2; \ldots; d_m m_{p_m}]\|_{\op} \le \max_{i}{|d_i|}\cdot \|M\|_\op.
\end{align*}

Now, from standard concentration inequalities we have $P(|\|Z_{m,i}\|/n_m^{1/2} - 1|\ge \delta + 1/\sqrt{n_m}) \le 2\exp(-n_m\delta^2/2)$. This follows from the Lipschitz concentration of Gaussian random variables, see e.g., Example 2.28 in \cite{wainwright2019hds}, and from the fact that the mean of the $\chi(n_m)$ random variable $\|Z_{m,i}\|$ is bounded as $\sqrt{n_m}-1\le \E \|Z_{m,i}\|\le \sqrt{n_m}$, see exercise 3.1 in \cite{boucheron2013concentration}. 

Taking a union bound, we find that $P(\max_{i=1,\ldots,p_m}|\|Z_{m,i}\|_2/n_m^{1/2} - 1|\ge \delta +  1/\sqrt{n_m}) \le 2\exp(\log p_m-n_m\delta^2/2)$. So, $\max_{i=1,\ldots,p_m}|\|Z_{m,i}\|_2/n_m^{1/2} - 1| \to_P 0$ as long as there is a sequence $\delta=\delta_{m}$ such that $\delta_{m}\to 0$ and $n_m\delta_{m}^2-2\log p_m\to \infty$. This holds if $\log p_m = o(n_m)$. Then, we also have that $\max_{i=1,\ldots,p_m}|n_m^{1/2}/\|Z_{m,i}\|_2 - 1|\to_P 0$.

Thus denoting $Z_m =[Z_{m,1};\ldots; Z_{m,p_m}]$, with probability tending to unity,
$$
\|[Z_{m,1}/\|Z_{m,1}\|_2 \cdot \|s_{m,1}\|_2;\ldots; Z_{m,p_m}/\|Z_{m,p_m}\|_2 \cdot \|s_{m,p_m}\|_2]\|_\op
\le 
(1+o_P(1))\|s_m\|_{2,\infty}/n_m^{1/2}\cdot \|Z_m\|_\op
.$$
It is well known that as $n_m,p_m\to \infty$ such that $c_0\le n_m/p_m\le c_1$ for some $0<c_0<c_1$, we have almost surely that $\|Z_m\|_\op \le (1+o_P(1))(\sqrt{n_m}+\sqrt{p_m})$.
This follows from  \cite[][Theorem 2.13]{davidson2001local}.
Hence, we can take $\tilde t_m = (1+o_P(1))\|s_m\|_{2,\infty}(\sqrt{n_m}+\sqrt{p_m})/n_m^{1/2}$. 

Now, due to the distributional invariance of $N_m$, we have 
$$\|N_m\|_\op
=_d
\|[Z_{m,1}/\|Z_{m,1}\|_2 \cdot \|N_{m,1}\|_2;\ldots; Z_{m,p_m}/\|Z_{m,p_m}\|_2 \cdot \|N_{m,p_m}\|_2]\|_\op$$

Hence, using the same argument as above, for any sequence $t_{m,2}$ such that $\|N_m\|_{2,\infty} \le t_{m,2}$ with probability tending to unity, we can take $t_m =  (1+o_P(1))(\sqrt{n_m}+\sqrt{p_m}) \cdot t_{m,2}/n_m^{1/2}$. 
Thus, a sufficient condition is that there is a sequence $t_{m,2}$ such that $P(\|N_m\|_{2,\infty} \le t_{m,2})\to 1$ and
$$\|s_m\|_\op> (1+o_P(1))[1+(p_m/n_m)^{1/2}] \cdot \left(\|s_m\|_{2,\infty}+2t_{m,2}\right).$$
This holds when
$$\liminf_{m\to\infty}\frac{\|s_m\|_\op/(n_m^{1/2}+p_m^{1/2})}
{(\|s_m\|_{2,\infty}+2t_{m,2})/n_m^{1/2}}>1.$$

This finishes the proof.

\color{black}
\subsubsection{Proof of Proposition \ref{e3}}
\label{pfe3}

Since the map $Y_m \mapsto \|X_m^\dagger Y_m\|_\infty$ is a quasi-norm, it is 1-subadditive.
Thus, the condition from Theorem \ref{cons} reads $\|P_{X_m}\beta_m\|_\infty> \tilde t_m + 2t_m.$ 
The requirement on $t_m,\tilde t_m$ is that with probability tending to unity, $\|X_m^\dagger \ep_m\|_\infty\le t_m$, 
and for $B_m = \diag(b_{m,1},\ldots, b_{m,p_m})$ with iid Rademacher entries $b_{m,i}$, $i\in[p_m]$, with probability tending to unity, $\|X_m^\dagger B_m X_m \beta_m\|_\infty \le \tilde t_m$.

Let $(l_m)_{m\ge1}$ be any sequence such that $l_m>0$ for all $m$ and $l_m \to \infty$ as $m\to \infty$.
Now, conditional on the vector $|\ep_m| = (|\ep_{m,1}|,\ldots,|\ep_{m,n_m}|)$, $X_m^\dagger \ep_m$ is an $n_m$-dimensional Bernoulli	 process over the rows of the matrix
$\mathcal X_m(|\ep_m|).$
Thus, conditional on $|\ep_m|$, we have $\|X_m^\dagger \ep_m\|_\infty \le $ $U^+(\mathcal X_m(|\ep_m|), l_m)$ with probability going to unity,  see \eqref{u+}. Thus, it is enough to take $t_m$ to be an upper bound of this quantity with probability tending to unity.

Next, writing $B_m = \diag(b_m)$,
\begin{align*}
\|X_m^\dagger B_m X_m \beta_m\|_\infty 
&\le \|X_m^\dagger B_m X_m \|_{\infty,\infty}\cdot \|\beta_m\|_\infty\\
& = \max_{j\in[p_m]}|[X_m^\dagger]_{j,\cdot}^\top \cdot B_m X_m \|_1
\cdot \|\beta_m\|_\infty\\
& = \max_{j\in[p_m]}\|X_m ^\top \diag([X_m^\dagger]_{j,\cdot}) \cdot b_m  \|_1
\cdot \|\beta_m\|_\infty\\
& = \|\beta_m\|_\infty \cdot  \sup_{v\in T(X_m)} v^\top b_m.
\end{align*}
Thus, it is enough if $\tilde t_m = U^+(T(X_m),l_m)$.
Thus, a sufficient condition is that there is a sequence $(l_m)_{m\ge1}$ such that $l_m>0$ for all $m$ and $l_m \to \infty$ as $m\to \infty$, and a sequence 
 $(t_{m})_{m\ge1}$ such that $P(U^+(\mathcal X_m(|\ep_m|),l_m) \le t_{m})\to 1$ and
$$\liminf_{m\to\infty}\|P_{X_m}\beta_m\|_\infty \frac{1-U^+(T(X_m),l_m)}{2t_m}> 1.$$

This finishes the proof.

\color{black}
\subsubsection{Proof of Proposition \ref{e5}}
\label{pfe5}

We can write $Z_{m,i} = \mu_m + \ep_{m,i}$, for $i\in [n_m]$, where $\ep_{m,i} \sim f_{0_m}$ are iid. 
Similarly, we can write $Y_{m,i} = \mu_m + \ep_{m,i}'$, for $i\in [n_m']$, where $\ep_{m,i}' \sim f_{0_m}$ are also iid.
We can arrange the datapoints as the rows of a matrix. 
This model has a signal-plus-noise form with nuisance $\mu_{m,*} = 1_{n_m+n_m'}\cdot \mu_m^\top $ and signal $S = [0_{n_m}; 1_{n_m'}] \cdot \Delta_m^\top$, where $\Delta_m=\mu_{m}'-\mu_m$.

We can follow our general approach for problems with nuisance parameters, see Section \ref{nuis}.
Let $P_m$ be the projection in the orthogonal complement of the span of the nuisance. We project $X_m = [Z_{m,1};\ldots; Z_{m,n_m}; Y_{m,1};\ldots; Y_{m,n_m'}]$ to  $\tilde X_m = P_mX_m$, and we obtain a standard signal-plus-noise model $\tilde{X}_m  = \tilde{s}_m + \tN_n$.
Since $P_m = I_{n_m+n_m'}-1_{n_m+n_m'}1_{n_m+n_m'}^\top/(n_m+n_m')$, we have $$\tilde{X}_m = [I_{n_m+n_m'}-1_{n_m+n_m'}1_{n_m+n_m'}^\top/(n_m+n_m')] \tilde{X}_m = \tilde{X}_m - 1_{n_m+n_m'}\bar X_m^\top.$$ 
Also 
$$\tilde{s}_m = P_m s_m = s_m - 1_{n_m+n_m'}\bar s_m^\top 
= [- n_m' \cdot 1_{n_m} ; n_m \cdot 1_{n_m'}]/(n_m+n_m') \cdot \Delta_m^\top.$$ 
We can write the test statistic 
$\|\bar Y_m-\bar Z_m\|_{\R^{p_m}}$ as $\|w^\top \tilde{X}_m\|_{\R^{p_m}}$, where $w= [-1_{n_m}/{n_m}; 1_{n_m'}/{n_m'}]$. Note that $P_{m}w=w$.


The test statistic is clearly 1-subadditive.
Thus, the condition from Theorem \ref{cons} reads $\|\Delta_m\|_{\R^{p_m}}> \tilde t_m + 2t_m.$ 
The requirement on $t_m,\tilde t_m$ is that with probability tending to unity, $\|w^\top \tilde N_m\|_{\R^{p_m}}\le t_m$, 
and for a uniformly random  permutation matrix $\Pi_m$ of $n_m+n_m'$ entries, with probability tending to unity, $\|w^\top \Pi_m \tilde s_m\|_{\R^{p_m}} \le \tilde t_m$. 

Now, 
$$\|w^\top \tilde N_m\|_{\R^{p_m}} 
= \|w^\top  N_m\|_{\R^{p_m}}
= \|\bar Y_{m}' -  \bar Z_{m}\|_{\R^{p_m}}
= \|({n_m'})^{-1}\sum_{i=1}^{n_m'} \ep_{m,i}' - n_m^{-1}\sum_{i=1}^{n_m} \ep_{m,i}\|_{\R^{p_m}}.
$$
Also,
$$\|w^\top \Pi_m \tilde s_m\|_{\R^{p_m}} 
= w^\top  \Pi_m w \cdot  \frac{n_m'n_m}{n_m+n_m'}  \|\Delta_m\|_{\R^{p_m}}.
$$
Consider the random variable $U = w^\top  \Pi_m w$, where the randomness is due to the random permutation matrix $\Pi_m$. 
Let $d=n_m+n_m'$ be the dimension of $w$. 
Now, if $\pi_m:[d]\mapsto[d]$ denotes the permutation represented by $\Pi_m$,
\begin{align*}
\E U^2 = \E w^\top  \Pi_m w \cdot w^\top  \Pi_m w
= \E  \sum_{ij} w_iw_{\pi_m(i)}w_jw_{\pi_m(j)}
= \sum_{ij} w_i w_j \E   w_{\pi_m(i)}w_{\pi_m(j)}.
\end{align*}
If $i=j$, then $\E   w_{\pi_m(i)}w_{\pi_m(j)} = \E   w_{\pi_m(i)}^2 = \|w\|^2/d$.
If $i\neq j$, then, since $\sum_k w_k = 0$, 
$$\E   w_{\pi_m(i)}w_{\pi_m(j)} = \frac1{d(d-1)}\sum_{k\neq l}w_{k}w_{l} = - \frac{\|w\|^2}{d(d-1)}.$$ 
Thus,
\begin{align*}
\E U^2
= \sum_{i} w_i^2 \|w\|^2/d
+
\sum_{i\neq j} w_i w_j (- \frac{\|w\|^2}{d(d-1)})
= \|w\|^4 \left(\frac1d+
 \frac{1}{d(d-1)}\right)
 = \frac{\|w\|^4 }{d-1}.
\end{align*}
Now, we can check that $\|w\|^2 =  \frac{n_m+n_m'}{n_m'n_m}$. Therefore, by Chebyshev's inequality, 
$$P(w^\top  \Pi_m w \cdot  \frac{n_m'n_m}{n_m+n_m'} \ge l_m)
=
P(U/\|w\|^2 \ge l_m)
\le \frac{\E (U^2/\|w\|^4)}{l_m^2}
= \frac1{(n_m+n_m'-1) l_m^2}
.
$$
Thus, if $l_m\to\infty$, we can take $\tilde t_m = l_m\cdot \frac{\|\Delta_m\|_{\R^{p_m}}}{(n_m+n_m'-1)^{1/2}}  $.
Thus, a sufficient condition is that there is a sequence $(l_m)_{m\ge1}$ such that $l_m>0$ for all $m$ and $l_m \to \infty$ as $m\to \infty$, and a sequence 
 $(t_{m})_{m\ge1}$ such that $P(\|({n_m'})^{-1}\sum_{i=1}^{n_m'} \ep_{m,i}' - n_m^{-1}\sum_{i=1}^{n_m} \ep_{m,i}\|_{\R^{p_m}}\le t_{m})\to 1$ and
$$\|\Delta_m\|_{\R^{p_m}}> l_m\cdot \frac{\|\Delta_m\|_{\R^{p_m}}}{(n_m+n_m'-1)^{1/2}}  + 2t_m.$$
This requires that $n_m+n_m'\to\infty$. Then, we can take $l_m$ to grow sufficiently slowly, and the above condition holds if 
$$\liminf_{m\to\infty}
\frac{\|\Delta_m\|_{\R^{p_m}}}{t_m}>2.$$

This finishes the proof.
\color{black}

{\small
\setlength{\bibsep}{0.2pt plus 0.3ex}
\bibliographystyle{plainnat-abbrev}
\bibliography{references}
}

\end{document}